 \newtheorem{thm}{Theorem}[section]
 \newtheorem{cor}[thm]{Corollary}
 \newtheorem{lem}[thm]{Lemma}
 \theoremstyle{definition}
 \newtheorem{defn}[thm]{Definition}
 \newtheorem{rem}[thm]{Remark}
 \newtheorem{ex}[thm]{Example}
 \numberwithin{equation}{section}
\numberwithin{equation}{section}
\newcommand{\cl}[1]{\mathcal{#1}} %\cl A
\newcommand{\pr}[1]{\mathrm{pr}(#1)} %\pr{K}
\newcommand{\nor}[1]{\left\Vert #1\right\Vert}
\newcommand{\Map}[1]{\mathrm{Map}(#1)} %\Map (U)
\newcommand{\Lat}[1]{\mathrm{Lat}(#1)} %\Lat{A}
\newcommand{\Alg}[1]{\mathrm{Alg}(#1)} %\Alg{L} 
\begin{document}

\title[Morita equivalence of w*-rigged modules]
 {Morita equivalence of w*-rigged modules}

\author{E.Papapetros}
\address{University of Patras\\Faculty of Sciences\\ Department of Mathematics\\265 00 Patras Greece}
\email{e.papapetros@upatras.gr}
%----------classification, keywords, date

\subjclass{Primary: 47L30, 47L35, 47L05, 47L25; Secondary: 16D90}
\keywords{Operator algebras, Nest algebras, TRO, w*-rigged modules, Stable isomorphism, Morita equivalence}

\begin{abstract}  
The w*-rigged modules over dual operator algebras were introduced by Blecher and Kashyap as a generalization of W*-modules. In this paper, we introduce two new types of Morita equivalence between right w*-rigged modules over unital dual operator algebras and we examine whether these notions imply stable isomorphism between the corresponding modules. Furthermore we investigate them in detail for the class of right w*-rigged modules over nest algebras, a class which was characterized by G.K. Eleftherakis.
\end{abstract}

%\date{}

\maketitle

\section{Introduction}
The notion of a Hilbert $C^{*}$-module was developed in the early 1970's by Rieffel and Paschke \cite{Paschke, Rieffel}. These modules are useful tools in the Morita equivalence of $C^{*}$-algebras. The dual version is the notion of a W*-module, that is a Hilbert $C^{*}$-module over a von Neumann algebra, which satisfies an analogue of the Riesz representation theorem for Hilbert spaces. Recently in \cite{Ble-Kas},  Blecher and Kashyap generalized the notion of W*-module to the setting of dual operator algebras. The modules introduced there are called w*-rigged modules. This work was continued by Blecher and Kraus in \cite{Ble-Kr}. Unlike the W*-module situation, w*-rigged modules do not necessarily give rise to a w*-Morita equivalence of dual operator algebras in the sense of \cite{Ble-Kas-2}.

In this paper, we introduce two new types of Morita equivalence between right w*-rigged modules over unital dual operator algebras. We call them Morita equivalence and strongly projectively Morita equivalence. In particular, the notion of strongly projectively Morita equivalence which we introduce uses an interesting subclass of the w*-rigged modules, the strongly projectively w*-rigged modules which in turn define a subclass of the projectively w*-rigged modules, see \cite{Ble-Kr, Ble-Kas}. 
The notion of strongly projectively Morita equivalence also implies stable isomorphism between the corresponding w*-rigged modules in the sense of \cite{Ele-Paul-Tod} as well as between the dual operator algebras on which these modules act, whereas the notion of Morita equivalence does not necessarily imply stable isomorphism between either the corresponding modules or between the algebras.

Furthermore, we study a special class of right w*-rigged modules, the right w*-rigged modules over nest algebras and we prove relevant theorems for Morita equivalence between them.  The rest of this paper is organized as follows:

In Section 2 we recall some known definitions which are useful for the next sections. Also, we introduce the notion of a concrete strongly projectively w*-rigged module as well as the notion of a strongly projectively w*-rigged module. We provide a characterization for the class of concrete strongly projectively w*-rigged modules in the case the module is over a reflexive operator algebra, see Theorem \ref{proj-lat}.

Both new types of Morita equivalence between right w*-rigged modules over unital dual operator algebras are presented in Section 3. More specifically, if $\cl A$ and $\cl B$ are unital dual operator algebras and $E,\,F$ are right w*-rigged modules over $\cl A$ and $\cl B$ respectively, then we call $E$ and $F$ Morita equivalent if there exists a right w*-rigged module $Y$ over $\cl B$ such that \begin{enumerate}[(i)]
    \item $\cl A\cong Y\otimes_{\cl B}^{\sigma h}\tilde{Y}$ as dual operator $\cl A$-$\cl A$-bimodules.
    \item $\cl B\cong \tilde{Y}\otimes_{\cl A}^{\sigma h}Y$ as dual operator $\cl B$-$\cl B$-bimodules.
    \item $F\cong E\otimes_{\cl A}^{\sigma h}Y$ as right w*-rigged modules over $\cl B$ and $E\cong F\otimes_{\cl B}^{\sigma h}\tilde{Y}$ as right w*-rigged modules over $\cl A.$
\end{enumerate}
Here $\tilde{Y}$ is the counterpart bimodule of $\,Y$ and $\otimes_{\cl A}^{\sigma h}$ (resp. $\otimes_{\cl B}^{\sigma h}$) is the balanced normal Haagerup tensor product over $\cl A$ (resp. $\cl B$). For further details, see \cite{Ble-Kas-2}. 

From the conditions (i) and (ii) above we deduce that the dual operator algebras $\cl A$ and $\cl B$ are w*-Morita equivalent in the sense of \cite{Ble-Kas-2}.

We provide a counterexample to show that Morita equivalent right w*-rigged modules are not necessarily stably isomorphic.

The other type of Morita equivalence is strongly projectively Morita equivalence. If $E$ is a right w*-rigged module over a unital dual operator algebra $\cl A$ and $F$ is a right w*-rigged module over a unital dual operator algebra $\cl B$ then we call $E$ and $F$ strongly projectively Morita equivalent if there exists a strongly projectively right w*-rigged module $Y$ over $\cl B$ such that  \begin{enumerate}[(i)]
    \item $\cl A\cong Y\otimes_{\cl B}^{\sigma h}\tilde{Y}$ as dual operator $\cl A$-$\cl A$-bimodules.
    \item $\cl B\cong \tilde{Y}\otimes_{\cl A}^{\sigma h}Y$ as dual operator $\cl B$-$\cl B$-bimodules.
    \item $F\cong E\otimes_{\cl A}^{\sigma h}Y$ as right w*-rigged modules over $\cl B$ and $E\cong F\otimes_{\cl B}^{\sigma h}\tilde{Y}$ as right w*-rigged modules over $\cl A.$
\end{enumerate}

We will also prove that if $E$ and $F$ are strongly projectively Morita equivalent then $\cl A,\,\cl B$ are stably isomorphic (see Remark \ref{st}) and $E,\,F$ are also stably isomorphic (see Theorem \ref{stably isom}).

In Section 4, we focus on right w*-rigged modules over nest algebras. In \cite{Ele-app}, G.K. Eleftherakis characterized these modules. In particular he proved the following theorem.  

\begin{thm}\,\cite{Ele-app}.
\label{char}
Let $\,Y$ be a right dual operator module over a nest algebra $\Alg{\cl N}\subseteq \mathbb{B}(H).$ Then $\,Y$ is a right $\,\rm{w}^*$-rigged module over $\Alg{\cl N}$ if and only if there exist a Hilbert space $K,$ a $\,\rm{w}^*$-continuous and completely isometric right $\Alg{\cl N}$-module map $\Psi:Y\to \mathbb{B}(H,K),$ a nest $\cl M$ and a continuous onto nest homomorphism map $\phi:\cl N\to \cl M$ such that $\Psi(Y)$ is an $\Alg{\cl M}$-$\Alg{\cl N}$-bimodule and $$\Psi(Y)=Op(\phi):=\left\{y\in\mathbb{B}(H,K)\mid \phi(p)^{\perp}\,y\,p=0,\,\forall\,p\in\cl N\right\}$$ where, in general, for every projection $p$ of a Hilbert space $L$ we denote by $p^{\perp}$ the projection $Id_{L}-p.$
\end{thm}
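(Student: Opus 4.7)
The plan is to prove the two directions separately, with the forward (``only if'') direction being the more substantial one. For the easier reverse direction, I would assume $\Psi(Y)=Op(\phi)$ and construct the w*-rigging data of $Y$ over $\Alg{\cl N}$ by exploiting the nest structure. Choose finite partitions of the identity of the form $0=p_0<p_1<\cdots<p_n=\id_H$ in $\cl N$, set $q_j=\phi(p_j)\in\cl M$, and use the relations $q_j^{\perp}\Psi(Y)p_j=0$ to write, for each $\Psi(y)\in\Psi(Y)$, a telescoping expression of the form $\sum_{j=1}^n (q_j-q_{j-1})\Psi(y)(p_j-p_{j-1})$. This would supply finite-rank-like factorisations of $\id_Y$ through column modules $C_n(\Alg{\cl N})$ built from completely contractive right $\Alg{\cl N}$-module maps, and refining the partition while appealing to the continuity of $\phi$ and the strong density of partitions of $\id_H$ in $\cl N$ would give convergence to $\id_Y$ in the point-weak* sense, which is precisely the defining property of a w*-rigged module.

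For the forward direction, assume $Y$ is a right w*-rigged module over $\Alg{\cl N}$. The starting point is the Blecher--Kashyap representation theorem, which realises $Y$ as a corner of a dual linking algebra. Explicitly, with $\cl B:=\cl K_{\Alg{\cl N}}(Y)$, there is a faithful weak*-continuous representation of the linking algebra on $H\oplus K$ in which $\Alg{\cl N}$ acts on $H$ via the ambient embedding and $\cl B$ acts on $K$; this yields a completely isometric, weak*-continuous right $\Alg{\cl N}$-module map $\Psi:Y\to\mathbb{B}(H,K)$ whose image is a weak*-closed $\cl B$-$\Alg{\cl N}$-bimodule. For each $p\in\cl N$, define $\phi(p)\in\mathbb{B}(K)$ to be the orthogonal projection onto the weak* closure of $\Psi(Y)pH$. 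Since $\cl B\,\Psi(Y)\subseteq\Psi(Y)$, the subspace $\Psi(Y)pH$ is invariant under $\cl B$, hence $\phi(p)\in\Lat{\cl B}$.

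I would then verify, in order: (a) $\cl M:=\{\phi(p):p\in\cl N\}\cup\{0,\id_K\}$ is a nest, by monotonicity of $p\mapsto\overline{\Psi(Y)pH}^{w^*}$; (b) $\phi:\cl N\to\cl M$ is a continuous surjective nest homomorphism, with continuity following from the weak*-continuity of $\Psi$ together with the SOT-continuity of the nest $\cl N$; (c) $\Psi(Y)$ is an $\Alg{\cl M}$-$\Alg{\cl N}$-bimodule, using that $\cl B\subseteq\Alg{\cl M}$ and that the left bimodule action on $\Psi(Y)$ is already encoded in the linking algebra representation. By construction $\phi(p)^{\perp}\Psi(y)p=0$ for all $p\in\cl N$ and $y\in Y$, so $\Psi(Y)\subseteq Op(\phi)$.

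The main obstacle is the reverse inclusion $Op(\phi)\subseteq\Psi(Y)$, which is a reflexivity statement for the bimodule $\Psi(Y)$ inside $\mathbb{B}(H,K)$. I would attack it by combining two ingredients: the weak*-closedness of $\Psi(Y)$ (from the corner realisation), and the Erdos--Davidson-style reflexivity theorems for weak*-closed bimodules between nest algebras, which characterise operators $b\in\mathbb{B}(H,K)$ satisfying $\phi(p)^{\perp}bp=0$ for all $p\in\cl N$ as belonging to the nest bimodule determined by $\phi$. The delicate point is matching $\Lat{\cl B}$ exactly with the nest $\cl M$ so that the standard nest-bimodule reflexivity applies directly; showing either that $\cl B=\Alg{\cl M}$ or that $\cl B$ is sufficiently dense in $\Alg{\cl M}$ for this argument is where I expect the bulk of the technical work.
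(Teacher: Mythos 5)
This theorem is not proved in the present paper at all: it is quoted verbatim from \cite{Ele-app}, so there is no in-house argument to compare with. Judged on its own terms, your proposal has the right overall shape in the forward direction but contains a genuine gap in each direction.

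For the reverse direction, the telescoping device does not do what you need. First, the identity is wrong: for $y\in Op(\phi)$ one has $q_j\,y\,(p_j-p_{j-1})=y\,(p_j-p_{j-1})$, so
\begin{equation*}
\sum_{j=1}^n (q_j-q_{j-1})\,y\,(p_j-p_{j-1})=y-\sum_{j=1}^n q_{j-1}\,y\,(p_j-p_{j-1}),
\end{equation*}
and as the partition refines the subtracted term tends to the ``strictly upper triangular'' part of $y$ relative to $\phi$, so the sum converges (when it converges at all) to a diagonal compression of $y$, not to $y$. Second, and more fundamentally, each term lives in $\mathbb{B}(H,K)$, not in $\Alg{\cl N}$, so this expression does not yield completely contractive right $\Alg{\cl N}$-module maps $Y\to C_n(\Alg{\cl N})\to Y$ as Definition \ref{bas-def} requires. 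The rigging of $Op(\phi)$ genuinely needs the companion space $Op(\phi)^{\sharp}=\{v\in\mathbb{B}(K,H)\mid p^{\perp}v\,\phi(p)=0,\ \forall p\in\cl N\}$, for which $Op(\phi)^{\sharp}\,Op(\phi)\subseteq\Alg{\cl N}$ and $I_K\in\overline{[Op(\phi)\,Op(\phi)^{\sharp}]}^{\text{w}^*}$; nets $\sum_i u_i v_i\to I_K$ with $u_i\in Op(\phi)$, $v_i\in Op(\phi)^{\sharp}$ then give $\phi_a(y)=(v_iy)_i$ and $\psi_a((a_i)_i)=\sum_i u_i a_i$. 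This is exactly the mechanism of \cite[Theorem 2.1]{Ele-app}, and it is also the pattern used in Lemmas \ref{restriction}--\ref{tensor} of this paper; your partition scheme bypasses it without a replacement.

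For the forward direction, the plan (corner representation, $\phi(p)=\Map{\Psi(Y)}(p)$, then reflexivity) is essentially the correct route, but the two steps you defer are precisely where the w*-rigged hypothesis must enter, and neither is routine. Continuity and surjectivity of $\phi$, and the fact that $\cl M$ is a complete nest, do not follow from ``weak*-continuity of $\Psi$ plus SOT-continuity of $\cl N$''; they require the approximate factorizations of the identity coming from the rigging nets. Likewise, the inclusion $Op(\phi)\subseteq\Psi(Y)$ is better obtained directly from those factorizations (writing $z=\lim\sum_i u_i(w_iz)$ with $w_iz\in\Alg{\cl N}$, so $z\in\overline{[\Psi(Y)\,\Alg{\cl N}]}^{\text{w}^*}=\Psi(Y)$, exactly as in the proof of Lemma \ref{composition}) rather than from Erd\H{o}s--Power-type reflexivity, which would force you to first prove the identification of $\Lat{\cl B}$ with $\cl M$ that you acknowledge you cannot yet supply. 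As written, the proposal is an outline whose critical steps are either incorrect (reverse direction) or left open (forward direction).
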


We prove that if $\phi:\cl N_1\to \cl M_1$ and $\psi:\cl N_2\to M_2$ are continuous onto nest homomorphisms and $\chi:\cl N_1\to \cl N_2$ is a nest isomorphism such that $E=Op(\phi)\cong Op(\psi\circ \chi)$ as $\Alg{\cl N_1}$-right w*-rigged modules, then $E$ and $F=Op(\psi)$ are Morita equivalent.

Finally, we prove that if $\phi:\cl N_1\to \cl M_1$ and $\psi:\cl N_2\to \cl M_2$ are continuous onto nest homomorphisms and $\chi:\cl N_1\to \cl N_2$ is a nest isomorphism which can be extended to a $*$-isomorphism $\chi':\cl N_1^{\prime \prime}\to \cl N_2^{\prime \prime}$ such that $E=Op(\phi)\cong Op(\psi\circ \chi)$ as $\Alg{\cl N_1}$-right w*-rigged modules, then $E$ and $F=Op(\psi)$ are strongly projectively Morita equivalent.

\section{Preliminaries}
Our notation is standard. If $H$ and $K$ are Hilbert spaces then $\mathbb{B}(H,K)$ is the space of all linear and bounded operators from $H$ to $K$. If $H=K$ we write $\mathbb{B}(H,H)=\mathbb{B}(H).$ A dual operator algebra is an operator algebra which is also a dual operator space. Every w*-closed subalgebra of some $\mathbb{B}(H)$ is a dual operator algebra. Conversely, for any dual operator algebra $\cl A$ there exist a Hilbert space $H$ and a w*-continuous and completely isometric homomorphism $\alpha:\cl A\to \mathbb{B}(H).$ In this case we identify the algebra $\cl A$ with the w*-closed subalgebra $\alpha(\cl A)\subseteq \mathbb{B}(H).$ In the following, all dual operator algebras are unital, that is they possess an identity of norm $1.$ The diagonal of a dual operator algebra $\cl A$ is the $C^{*}$-algebra $\Delta(\cl A)=\cl A\cap \cl A^{*}.$ 

If $X$ is a subset of $\mathbb{B}(H,K)$ and $Y$ is a subset of $\mathbb{B}(K,L)$ then we denote by $\overline{[YX]}^{\text{w}^*}$ the w*-closure of the linear span of the set $$\left\{y\,x\in\mathbb{B}(H,L)\mid x\in X,\,y\in Y\right\}.$$

Furthermore, if $Z$ is a subset of $\mathbb{B}(L,R),$ then we denote by $\overline{[Z\,YX]}^{\text{w}^*}$ the w*-closure of the linear span of the set $$\left\{z\,y\,x\in\mathbb{B}(H,R)\mid x\in X,\,y\in Y,\,z\in Z\right\}.$$  A concrete right dual operator module $Y$ over a dual operator algebra $\cl A$ is a w*-closed subspace $Y\subseteq \mathbb{B}(H,K)$ such that $\overline{[Y\,\alpha(\cl A)]}^{\rm{w}^*}=Y$ for some w*-continuous and completely isometric unital homomorphism $\alpha:\cl A\to \mathbb{B}(H).$ 

An abstract right dual operator space $Y$ over a dual operator algebra $\cl A$ is defined to be a non-degenerate module over $\cl A$ which is also a dual operator space and the bilinear map $Y\times \cl A\to Y$ is separately w*-continuous with dense range. Similarly we define the two-sided bimodules which are considered to be non-degenerate both on the left and on the right. If $X$ is an operator space and $I,J$ are cardinals or sets, then we use the symbol $\mathbb{M}_{I,J}(X)$ for the operator space of $I\times J$ matrices over $X$ whose finite submatrices have uniformly bounded norm. We set $C_{J}^{w}(X)=\mathbb{M}_{J,1}(X)$ and $R_{J}^{w}(X)=\mathbb{M}_{1,J}(X)$ and these are written as $C_{n}(X)$ and $R_{n}(X)$ respectively if $J=n$ is finite. In the case where $X$ is a dual operator space, so is $\mathbb{M}_{I,J}(X)$ for any cardinals or sets $I,J$.  For further details we refer the reader to \cite{BleLeM04,Ble-Kas-2}. 

\begin{defn}\,\cite{Ele-Paul-Tod}.
\label{stable}
\,\,If $X,Y$ are dual operator spaces then we say that $X$ and $Y$ are \emph{stably isomorphic} if there exist a cardinal $J$ and a w*-continuous completely isometric map from $\mathbb{M}_{J}(X)$ onto $\mathbb{M}_{J}(Y)$. Here, $\mathbb{M}_{J}(X)=\mathbb{M}_{J,J}(X)$. Similarly $\mathbb{M}_{J}(Y)=\mathbb{M}_{J,J}(Y).$
\end{defn}

In \cite{Ele-Paul-Tod},  Eleftherakis,  Paulsen and Todorov proved that two dual operator spaces $X$ and $Y$ are stably isomorphic if and only if they are $\Delta$-equivalent. The special case of dual operator algebras has been studied in \cite{Ele-Paul,Ele-Paul-Tod}. We remind the relevant definitions.

\begin{defn}\,\cite{Ele-Paul-Tod}.
\label{TRO}
\,\,Let $X\subseteq \mathbb{B}(K_1,K_2)$ and $Y\subseteq \mathbb{B}(H_1,H_2)$ be dual operator spaces. We say that $X$ is \emph{TRO-equivalent} to $Y$ if there exist w*-closed TRO's $M_i\subseteq \mathbb{B}(H_i,K_i),\,i=1,2$ (i.e w*-closed linear subspaces of $\,\mathbb{B}(H_i,K_i)$ satisfying $M_i\,M_i^{*}M_i\subseteq M_i,\,i=1,2$) such that $$X=\overline{[M_2\,Y\,M_1^{*}]}^{\text{w}^*},\,Y=\overline{[M_2^{*}\,X\,M_1]}^{\text{w}^*}.$$
\end{defn}

\begin{defn}\,\cite{Ele-Paul-Tod}.
\label{delta}
Let $X$ and $Y$ be dual operator spaces. We say that $X$ \emph{is $\Delta$-equivalent} to $Y$ and we write $X\sim_{\Delta}Y$ if there exist w*-continuous and completely isometric maps $\Phi:X\to \mathbb{B}(K_1,K_2)$ and $\Psi:Y\to \mathbb{B}(H_1,H_2)$ for some Hilbert spaces $H_i,K_i,\,i=1,2,$ such that $\Phi(X)$ is TRO-equivalent to $\Psi(Y).$
\end{defn}

\begin{rem}
Other notions of Morita equivalence of dual operator algebras or dual operator spaces exist in \cite{Ble-Kas-1, Ele-mor-alg, Ele-ref,Ele-mor}.
\end{rem}

\subsection{w*-rigged modules}

\begin{defn}\,\cite{Ble-Kas}.
\label{bas-def}
\,Let $Y$ be a dual operator space which is a right operator module over a dual operator algebra $\cl A.$ Suppose that there exist a net of positive integers $(n(a))_{a\in J}$ and w*-continuous completely contractive right $\cl A$-module maps $$\phi_{a}:Y\to C_{n(a)}(\cl  A),\,\psi_{a}:C_{n(a)}(\cl A)\to Y$$ such that $\psi_{a}(\phi_{a}(y))\to y$ in the w*-topology on $Y,$ for all $y\in Y.$ Then we say that $Y$ is a \emph{right} w*-\emph{rigged module} over $\cl A.$  
\end{defn}
We also give the definition of a left w*-rigged module over a dual operator algebra.
\begin{defn}
\,Let $Y$ be a dual operator space which is a left operator module over a dual operator algebra $\cl A.$ Suppose that there exist a net of positive integers $(n(a))_{a\in J}$ and w*-continuous completely contractive left $\cl A$-module maps $$\phi_{a}:Y\to R_{n(a)}(\cl  A),\,\psi_{a}:R_{n(a)}(\cl A)\to Y$$ such that $\psi_{a}(\phi_{a}(y))\to y$ in the w*-topology on $Y,$ for all $y\in Y.$ Then we say that $Y$ is a \emph{left} w*-\emph{rigged module} over $\cl A$.  
\end{defn}

We identify two right (resp. left) w*-rigged modules $X,\,Y$ over a dual operator algebra $\cl A$ as $\cl A$-right (resp. left) dual operator modules, if there is a surjective w*-homeomorphic and completely isometric right (resp. left) $\cl A$-module map between them. In this case we write $X\cong Y$.

\begin{defn}
\label{bmaps}
\,\cite{Ble-Kas}.
If $\,X$ and $Y$ are right w*-rigged modules over a dual operator algebra $\cl A,$ then we denote by $B(X,Y)=\text{w}^{*} CB_{\cl A}(X,Y)$ the dual operator space of all w*-continuous and completely bounded right $\cl A$-module maps from $X$ to $Y$. If $X=Y$ then we write $B(Y)=\text{w}^{*} CB_{\cl A}(Y,Y)=\text{w}^{*} CB_{\cl A}(Y).$

\end{defn}

\begin{defn}\,\cite{Ble-Kas}.
For a right w*-rigged module $Y$ over a dual operator algebra $\cl A$ we denote by $\tilde{Y}$ the space $B(Y,\cl A)=\text{w}^{*}CB_{\cl A}(Y,\cl A).$

\end{defn}

According to \cite[Lemma 3.2]{Ble-Kas} we have that $\tilde{Y}$ is a w*-closed subspace of $CB_{\cl A}(Y,\cl A)$ and a left w*-rigged module over $\cl A$.

\begin{rem}\,\cite{Ble-Kas-2}.
\label{bimodule}
If $\cl A$ and $\cl B$ are dual operator algebras, $Y$ is an $\cl A$-$\cl B$-dual operator bimodule and $X$ is a $\cl B$-$\cl A$-dual operator bimodule, then $Y\otimes_{\cl B}^{\sigma h}X$ (resp. $X\otimes_{\cl A}^{\sigma h}Y$) is a dual operator $\cl A$-$\cl A$-(resp. $\cl B$) bimodule. In the case where $Y$ and $X$ are both right  w*-rigged modules over $\cl B$ and $\cl A$ repsectively, then $Y\otimes_{\cl B}^{\sigma h}X$ (resp. $X\otimes_{\cl A}^{\sigma h}Y$) is a right w*-rigged module over $\cl A$ (resp. $\cl B$), \cite[Subsection 3.3]{Ble-Kas}.
\end{rem}

We present now a subclass of right w*-rigged modules, the projectively right w*-rigged modules.

\begin{defn}\,\cite{Ble-Kas}.
\label{projectively}
\,If $\cl A\subseteq \mathbb{B}(H)$ is a dual operator algebra and $M\subseteq \mathbb{B}(H,K)$ is a w*-closed TRO such that $M^{*}\,M\subseteq \cl A$ then we call the space $Y=\overline{[M\,\cl A]}^{\text{w}^*}$ a \emph{projectively right} w*-\emph{rigged module} over $\cl A.$ 
\end{defn}

\begin{rem}
\label{tilde}
If $\cl A,\,M$ and $Y$ are as above, then by \cite[Example (8)]{Ble-Kas}, $\,Y$ is a right w*-rigged module in the sense of Definition \ref{bas-def} and $\tilde{Y}\cong \overline{[\cl A\,M^{*}]}^{\text{w}^*}.$ The converse is not true. Not every w*-rigged module is a projectively w*-rigged module. For further details we refer the reader to \cite[Proposition 4.4]{Ble-Kr} and \cite{Ele-mor-alg}. 
\end{rem}

We introduce the notion of a concrete strongly projectively right w*-rigged module as well as the notion of a strongly projectively right w*-rigged module over a unital dual operator algebra.

\begin{defn}
\label{strongly}
Let $\cl A\subseteq \mathbb{B}(H)$ be a unital dual operator algebra and let also $M\subseteq \mathbb{B}(H,K)$ be a w*-closed TRO such that $M^{*}\,M\subseteq \cl A$ and $1_{\overline{[M^{*}\,M]}^{\text{w}^*}}=1_{\cl A}.$ We call the dual operator space $Y=\overline{[M\,\cl A]}^{\text{w}^*}$ a \emph{concrete strongly projectively right} w*-\emph{rigged module} over $\cl A.$
\end{defn}

\begin{defn}
Let $\cl A$ be a unital dual operator algebra and let $Y$ be a dual operator space. We call $\,Y$ a \emph{strongly projectively right} w*-\emph{rigged module} over $\cl A$ if there exist a w*-continuous and completely isometric unital homomorphism $\alpha:\cl A\to \mathbb{B}(H)$ and a w*-continuous completely isometric right $\cl A$-module map $\Psi:Y\to \mathbb{B}(H,K)$ such that $\Psi(Y)$ is a concrete strongly projectively right w*-rigged module over $\alpha(\cl A).$
\end{defn}

\subsection{Nest algebras}
Let $\pr{\mathbb{B}(H)}$ be the set of all projections on $\mathbb{B}(H).$ A set $\cl L\subseteq \pr{\mathbb{B}(H)}$ which contains $0,I_{H}$ and arbitrary intersections and closed spans is called a lattice. We denote by $\Alg{\cl L}$ the unital dual operator algebra $$\Alg{\cl L}=\left\{x\in\mathbb{B}(H)\mid p^{\perp}\,x\,p=0,\,\forall\,p\in \cl L\right\}.$$
Dually, if $\cl A$ is a unital dual operator algebra acting on a Hilbert space $H$ the set $$\Lat{\cl A}=\left\{p\in \pr{\mathbb{B}(H)}\mid p^{\perp}\,a\,p=0,\,\forall\,a\in\cl A\right\}$$ is a lattice.
If $\cl A=\Alg{\Lat{\cl A}}$ then the algebra $\cl A$ is called reflexive. It is obvious that $\Lat{\cl A}\subseteq \Delta(\cl A)^{\prime},$ where $\Delta(\cl A)^{\prime}$ is the commutant of $\Delta(\cl A)=\cl A\cap \cl A^*.$

A nest $\cl N$ is a totally ordered set of projections of a Hilbert space $H,$ which contains $0,\,I_{H}$ and is closed under arbitrary intersections and closed spans. The corresponding nest algebra is $$\Alg{\cl N}=\left\{x\in\mathbb{B}(H) \mid p^{\perp}\,x\,p=0,\,\forall\,p\in\cl N\right\}.$$ 

\begin{rem}
\label{nest-lat}
A nest algebra $\cl A=\Alg{\cl N}$ is a reflexive algebra and $\cl N=\Lat{\cl A},$ see \cite{Dav}. Furthermore, by \cite[Corollary 22.18]{Dav} we have that $\cl N^{\prime}=\Delta(\cl A)$ and as a consequence, $\cl N^{\prime \prime}=\Delta(\cl A)^{\prime}.$
\end{rem}

Let $\cl N,\,\cl M$ be nests acting on the Hilbert spaces $H,\,K$ respectively. An order preserving map $\phi:\cl N\to \cl M$ is called a nest homomorphism. If this map is also surjective and injective, it is called a nest isomorphism. In the case where $\phi:\cl N\to \cl M$ is a continuous (with respect to $\mathbf{WOT}$ topologies on $\pr{\mathbb{B}(H)}$ and $\pr{\mathbb{B}(K)}$ respectively) onto nest homomorphism the space $$Op(\phi)=\left\{y\in\mathbb{B}(H,K)\mid \phi(p)^{\perp}\,y\,p=0,\,\forall\,p\in\cl N\right\}$$ is an $\Alg{\cl N}$-right w*-rigged module which is also a left module over $\Alg{\cl M},$ see \cite[Theorem 2.1]{Ele-app}.

\begin{rem}\,\cite[Theorem 2.9]{Ele-mor}.
\label{for-ref}
\,Let $\,\cl N$ and $\cl M$ be nests acting on Hilbert spaces $H$ and $K$ respectively. The dual operator algebras $\cl A=\Alg{\cl N}$ and $\cl B=\Alg{\cl  M}$ are w*-Morita equivalent in the sense of  \cite{Ble-Kas-2}  if, and only if, there exists a nest isomorphism $\theta:\cl N\to \cl M$. 

In this case, if $\,Y=Op(\theta^{-1})$ and $X=Op(\theta),$ then \begin{enumerate}[(1)]
    \item $\cl A\cong Y\otimes_{\cl B}^{\sigma h}X$ as dual operator $\cl A$-$\cl A$-bimodules.
    \item $\cl B\cong X\otimes_{\cl A}^{\sigma h}Y$ as dual operator $\cl B$-$\cl B$-bimodules.
    \item $\tilde{Y}\cong X$ as right dual operator modules over $\cl B,$ \cite[Definition 4.2]{Ble-Kas}.
\end{enumerate}
\end{rem}

\begin{rem}
\label{map}

Let $\,Y$ be a subspace of $\,\mathbb{B}(H,K)$. We denote by $\Map{Y}$ the map which sends every $p\in\pr{\mathbb{B}(H)}$ to the projection of $K$ generated by the vectors of the form $y\,p(\xi)$ where $y\in Y$ and $\xi\in H$.

\end{rem}

The following theorem provides a characterization for the class of concrete strongly projectively right w*-rigged modules over reflexive operator algebras.

\begin{thm} 
\label{proj-lat}
Let $\,Y$ be a dual operator space and let $\,\cl A\subseteq \mathbb{B}(H)$ be a reflexive operator algebra. The following are equivalent.
\begin{enumerate}[(i)]
    \item $\,Y$ is a concrete strongly projectively right $\mathrm{w^*}$-rigged module over $\cl A.$
    \item There exist a reflexive operator algebra $\cl B\subseteq \mathbb{B}(K)$ and a $*$-isomorphism $\theta:\Delta(\cl A)^{\prime}\to \Delta(\cl B)^{\prime}$ such that $\theta(\Lat{\cl A})=\Lat{\cl B}$ and $$Y=\left\{y\in\mathbb{B}(H,K)\mid  \theta(p)^{\perp}\,y\,p=0,\,\forall\,p\in\Lat{\cl A}\right\}.$$
\end{enumerate}

\end{thm}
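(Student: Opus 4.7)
The plan is to prove both implications via the canonical *-isomorphism of commutants attached to a w*-closed TRO.

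For the direction (i) $\Rightarrow$ (ii), I would write $Y=\overline{[M\cl A]}^{\text{w}^*}$ with $M\subseteq\mathbb{B}(H,K)$ a w*-closed TRO satisfying $M^*M\subseteq\cl A$ and $1_{\overline{[M^*M]}^{\text{w}^*}}=1_H$. Set $\cl R=\overline{[M^*M]}^{\text{w}^*}\subseteq\mathbb{B}(H)$ and $\cl L=\overline{[MM^*]}^{\text{w}^*}\subseteq\mathbb{B}(K)$; after replacing $K$ with the range of $1_{\cl L}$, both $\cl R,\cl L$ become unital von Neumann algebras, and standard TRO theory supplies a *-isomorphism $\theta_0:\cl R'\to\cl L'$ determined by $\theta_0(T)m=mT$ for all $m\in M$, $T\in\cl R'$. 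Since $\cl R$ is self-adjoint and w*-closed inside $\cl A$, $\cl R\subseteq\Delta(\cl A)$, hence $\Delta(\cl A)'\subseteq\cl R'$; put $\theta=\theta_0|_{\Delta(\cl A)'}$ and $\cl B=\Alg{\theta(\Lat{\cl A})}\subseteq\mathbb{B}(K)$. The latter is reflexive by construction, and the identification $\Delta(\cl B)'=\theta(\Delta(\cl A)')$ follows from the reflexivity identities $\Delta(\cl A)'=\Lat{\cl A}''$ and $\Delta(\cl B)'=\Lat{\cl B}''$ once the equality $\Lat{\cl B}=\theta(\Lat{\cl A})$ is established; this equality rests on the fact that the w*-continuous *-isomorphism $\theta_0$ carries the reflexive lattice $\Lat{\cl A}$ to a reflexive lattice of projections in $\cl L'$. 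The inclusion $Y\subseteq\{y:\theta(p)^{\perp}yp=0\,\forall p\in\Lat{\cl A}\}$ is the computation $\theta(p)^{\perp}(ma)p=\theta(p)^{\perp}\theta(p)map=0$, using $\theta(p)m=mp$ and $ap=pap$ for $p\in\Lat{\cl A}$, $a\in\cl A$. For the reverse, given $y$ with $\theta(p)^{\perp}yp=0$ for all $p\in\Lat{\cl A}$, the identity $m^*\theta(p)=pm^*$ gives $M^*y\subseteq\Alg{\Lat{\cl A}}=\cl A$ by reflexivity of $\cl A$, and the assumption $1_{\cl R}=1_H$ then lets one recover $y$ as a w*-limit of elements of $M\cl A$.

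For the direction (ii) $\Rightarrow$ (i), I would define the w*-closed $\theta$-intertwiner space
\[
M=\left\{m\in\mathbb{B}(H,K)\mid mT=\theta(T)m,\ \forall\,T\in\Delta(\cl A)'\right\}.
\]
A direct computation using that $\theta$ is a *-isomorphism shows that for $m,n\in M$ the element $m^*n$ commutes with every $T\in\Delta(\cl A)'$ and hence lies in $\Delta(\cl A)''=\Delta(\cl A)\subseteq\cl A$; similarly $MM^*\subseteq\Delta(\cl B)$ and $MM^*M\subseteq M$, so $M$ is a w*-closed TRO with $M^*M\subseteq\cl A$. The computation $\theta(p)^{\perp}map=\theta(p)^{\perp}\theta(p)map=0$ gives $\overline{[M\cl A]}^{\text{w}^*}\subseteq Y$. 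The reverse inclusion and the non-degeneracy $1_{\overline{[M^*M]}^{\text{w}^*}}=1_H$ both rely on a spatial implementation of the *-isomorphism $\theta$ on the concrete spaces $H$ and $K$: the hypothesis that $\theta$ is a *-isomorphism of the entire algebra $\Delta(\cl A)'$ onto $\Delta(\cl B)'$ (rather than merely a lattice isomorphism $\Lat{\cl A}\to\Lat{\cl B}$ as in Theorem \ref{char}) provides a family of intertwiners in $M$ sufficient both to approximate every $y\in Y$ by w*-limits of sums $\sum_im_ia_i$ with $m_i\in M$, $a_i\in\cl A$, and to ensure that $\overline{[M^*M]}^{\text{w}^*}$ contains $1_H$.

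The principal obstacles are the two structural assertions just flagged: first, showing in (i) $\Rightarrow$ (ii) that the *-isomorphism $\theta_0$ transports the reflexivity of the lattice $\Lat{\cl A}$, i.e.\ that $\Lat{\Alg{\theta(\Lat{\cl A})}}=\theta(\Lat{\cl A})$; and second, producing in (ii) $\Rightarrow$ (i) a spatial implementation of $\theta$ that simultaneously yields density of $M\cl A$ in $Y$ and the strong non-degeneracy $1_{\overline{[M^*M]}^{\text{w}^*}}=1_H$. The latter non-degeneracy is precisely the feature that distinguishes \emph{strongly projective} from merely \emph{projective} in Definition \ref{strongly}, and its compatibility with an abstract *-isomorphism of the full diagonal commutants is what makes the equivalence (i) $\Leftrightarrow$ (ii) go through.
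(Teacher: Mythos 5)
Your architecture coincides with the paper's — pass from the TRO $M$ to a $*$-isomorphism of the commutants of the diagonals in one direction, and from $\theta$ to its intertwiner TRO in the other — and the elementary computations you actually carry out (the intertwining identities $\theta(p)m=mp$, $m^{*}\theta(p)=pm^{*}$, the inclusions $M^{*}M\subseteq\Delta(\cl A)$, $\overline{[M\,\cl A]}^{\text{w}^*}\subseteq Y$, and $M^{*}Y\subseteq\Alg{\Lat{\cl A}}=\cl A$) are all correct. However, the two points you yourself flag as ``principal obstacles'' are not loose ends to be smoothed over later; they are the substance of the theorem, and as written your proposal does not prove them. Concretely: (a) in (i)$\Rightarrow$(ii) you set $\cl B=\Alg{\theta(\Lat{\cl A})}$, so condition (ii) requires $\Lat{\Alg{\theta(\Lat{\cl A})}}=\theta(\Lat{\cl A})$, i.e.\ that the transported lattice is again reflexive — this is not automatic for an image of a reflexive lattice under an abstract $*$-isomorphism of von Neumann algebras and must be extracted from the TRO structure; (b) in (ii)$\Rightarrow$(i) you need the intertwiner space $M$ of $\theta$ to be large enough that $\cl A=\overline{[M^{*}\,\cl B\,M]}^{\text{w}^*}$, which is what simultaneously delivers the reverse inclusion $Y\subseteq\overline{[M\,\cl A]}^{\text{w}^*}$ and the strong non-degeneracy $1_{\overline{[M^{*}M]}^{\text{w}^*}}=1_{\cl A}$; an unspecified ``spatial implementation'' of $\theta$ does not by itself establish this fullness.

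The paper closes precisely these two gaps by quoting the TRO-equivalence machinery of \cite{Ele-TRO}. For (i)$\Rightarrow$(ii) it takes $\cl B=\overline{[M\,\cl A\,M^{*}]}^{\text{w}^*}$ (rather than defining $\cl B$ as an $\Alg{}$, so reflexivity is a theorem, not a definition), observes that $\cl A$ and $\cl B$ are TRO-equivalent, and invokes Remark~2.7 and Lemma~2.6 of \cite{Ele-TRO} for the reflexivity of $\cl B$ and for the extension of $\Map{M}$ to a $*$-isomorphism $\Delta(\cl A)'\to\Delta(\cl B)'$ carrying $\Lat{\cl A}$ onto $\Lat{\cl B}$; it then enlarges $M$ to the full intertwiner TRO $Z$ via Proposition~2.8 of \cite{Ele-TRO} to run the reverse inclusion $Y_{0}\subseteq Y$. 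For (ii)$\Rightarrow$(i) it invokes Theorem~3.2 of \cite{Ele-TRO}, which states exactly that the intertwiner space of a $*$-isomorphism between the von Neumann algebras $\Delta(\cl A)'$ and $\Delta(\cl B)'$ is a w*-closed TRO with $\cl B=\overline{[M\,\cl A\,M^{*}]}^{\text{w}^*}$ and $\cl A=\overline{[M^{*}\,\cl B\,M]}^{\text{w}^*}$; the unit condition and the density of $M\cl A$ in $Y$ then follow as in your sketch. If you are permitted to cite those results, your outline becomes a proof; without them it is incomplete at its two decisive steps.
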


\begin{proof}

$(i)\implies (ii)$
Let $M\subseteq \mathbb{B}(H,K)$ be a w*-closed TRO such that $M^{*}\,M\subseteq \cl A,\,1_{\overline{[M^{*}\,M]}^{\text{w}^*}}=1_{\cl A}$ and $Y=\overline{[M\,\cl A]}^{\text{w}^{*}}.$ We consider the dual operator algebra $\cl B=\overline{[M\,\cl A\,M^{*}]}^{\text{w}^{*}}\subseteq \mathbb{B}(K).$ Since $M^*\,M\subseteq \cl A$ and $1_{\overline{[M^{*}\,M]}^{\text{w}^*}}=1_{\cl A}$ we have that $$\overline{[M^{*}\,\cl B\,M]}^{\text{w}^{*}}=\overline{[M^*\,M\,\cl A\,M^*\,M]}^{\text{w}^*}=\cl A.$$
%In addition to this, $$\overline{[\cl B\,M]}^{\text{w}^*}=\overline{[M\,\cl A\,M^*\,M]}^{\text{w}^*}=\overline{[M\,\cl A]}^{\text{w}^*}=\Psi(Y).$$
Therefore, the algebras $\cl A$ and $\cl B$ are TRO-equivalent and by \cite[Remark 2.7]{Ele-TRO} it follows that $\cl B$ is also a reflexive algebra. According to
\cite[Proposition 2.5]{Ele-TRO} we also have that 
$$\Delta(\cl A)=\overline{[M^{*}\,\Delta(\cl B)\,M]}^{\text{w}^{*}},\,\Delta(\cl B)=\overline{[M\,\Delta(\cl A)\,M^{*}]}^{\text{w}^{*}}.$$ From \cite[Lemma 2.6]{Ele-TRO}, the map $\chi=\Map{M}:\pr{\Delta(\cl A)'}\to \pr{\Delta(\cl B)'}$ can be extended to a $*$-isomorphism $\theta:\Delta(\cl A)'\to \Delta(\cl B)'$ such that $\theta(\Lat{\cl A})=\Lat{\cl B}.$ If we define the w*-closed TRO $$Z=\left\{z\in\mathbb{B}(H,K)\mid z\,p=\theta(p)\,z,\,\forall\,p\in\pr{\Delta(\cl A)'}\right\}$$ then by \cite[Proposition 2.8]{Ele-TRO} we have that $M\subseteq Z$ and $$\cl A=\overline{[Z^{*}\,\cl B\,Z]}^{\text{w}^{*}},\,\cl B=\overline{[Z\,\cl A\,Z^{*}]}^{\text{w}^{*}}$$ $$\Delta(\cl A)=\overline{[Z^{*}\,Z]}^{\text{w}^{*}},\,\Delta(\cl B)=\overline{[Z\,Z^{*}]}^{\text{w}^{*}}.$$ We aim to show that $$Y=\left\{y\in\mathbb{B}(H,K)\mid  \theta(p)^{\perp}\,y\,p=0,\,\forall\,p\in\Lat{\cl A}\right\}.$$ We set $Y_0=\left\{y\in\mathbb{B}(H,K)\mid  \theta(p)^{\perp}\,y\,p=0,\,\forall\,p\in\Lat{\cl A}\right\}$ and we need to show that $Y=Y_0.$ We observe that $Y_0=\overline{[Z\,\cl A]}^{\text{w}^{*}}.$ Indeed, if $z\,a\in Z\,\cl A$ then for all $p\in\Lat{\cl A}$ we have that $$\theta(p)^{\perp}\,z\,a\,p=z\,a\,p-\theta(p)\,z\,a\,p=z\,a\,p-z\,p\,a\,p=z\,p^{\perp}\,a\,p=0$$ so $Z\,\cl A\subseteq Y_0$ and thus $\overline{[Z\,\cl A]}^{\text{w}^{*}}\subseteq Y_0.$ On the other hand, since $\cl B$ is unital we get that $Y_0\subseteq \overline{[\cl B\,Y_0]}^{\text{w}^{*}}=\overline{[Z\,\cl A\,Z^{*}\,Y_0]}^{\text{w}^{*}},$  so it suffices to prove that $Z^{*}\,Y_0\subseteq \cl A$. If $p\in\Lat{\cl A},\,z\in Z$ and $y\in Y_0$ we have that
    $$p^{\perp}\,z^{*}\,y\,p=z^{*}\,y\,p-p\,z^{*}\,y\,p=z^{*}\,y\,p-z^{*}\,\theta(p)\,y\,p=z^{*}\,\theta(p)^{\perp}\,y\,p=0$$
which means that $z^{*}\,y\in \Alg{\Lat{\cl A}}=\cl A$. Therefore $Z^{*}\,Y_0\subseteq \cl A$ as desired. Now, by the fact that $M\subseteq Z$ we have that $M\,\cl A\subseteq Z\,\cl A$ which implies that \begin{equation} \label{sub1}Y\subseteq Y_0.\end{equation} Moreover, \begin{equation} \label{sub2} Y_0\subseteq \overline{[Y_0\,M^{*}\,M]}^{\text{w}^{*}}\subseteq \overline{[\cl B\,Z\,M^{*}\,M]}^{\text{w}^{*}}\subseteq \overline{[\cl B\,M]}^{\text{w}^*}=Y\end{equation} since 
    $Z\,M^{*}\subseteq Z\,Z^{*}\subseteq \Delta(\cl B)\subseteq \cl B$.
    
By (\ref{sub1}) and (\ref{sub2}) we have that $Y=Y_0,$ as desired.

$(ii)\implies (i)$ Let $\cl B\subseteq \mathbb{B}(K)$ be a reflexive operator algebra and let $\,\theta:\Delta(\cl A)^{\prime}\to \Delta(\cl B)^{\prime}$ be a $*$-isomorphism such that $\theta(\Lat{\cl A})=\Lat{\cl B}$ and $$Y=\left\{y\in\mathbb{B}(H,K)\mid  \theta(p)^{\perp}\,y\,p=0,\,\forall\,p\in\Lat{\cl A}\right\}.$$ Since $\Delta(\cl A)^{\prime}$ and $\,\Delta(\cl B)^{\prime}$ are von Neumann algebras, from  \cite[Theorem 3.2]{Ele-TRO} we have that the space $$M=\left\{m\in\mathbb{B}(H,K) \mid m\,x=\theta(x)\,m,\,\forall\,x\in \Delta(\cl A)^{\prime}\right\}$$ is a w*-closed TRO such that $$\cl B=\overline{[M\,\cl A\,M^{*}]}^{\text{w}^{*}},\,\cl A=\overline{[M^{*}\,\cl B\,M]}^{\text{w}^{*}}.$$ It follows that $M^{*}\,M\subseteq \cl A$ and $1_{\cl A}=1_{\overline{[M^*\,M]}^{\text{w}^{*}}}.$ Indeed, since the unit of $\overline{[M^*\,M]}^{\text{w}^*}$ acts as a unit on the left and on the right on $\cl A=\overline{[M^*\,\cl B\,M]}^{\text{w}^*}$ (as $M$ is a TRO) and since it is a subalgebra of $\cl A,$ then the unit of $\overline{[M^*\,M]}^{\text{w}^*}$ is also a unit of $\cl A.$ We aim to show that $Y=\overline{[M\,\cl A]}^{\text{w}^{*}}.$ Indeed, for all $p\in\Lat{\cl A},\,m\in M$ and $a\in\cl A,$ it holds that $$\theta(p)^{\perp}\,m\,a\,p=m\,a\,p-\theta(p)\,m\,a\,p=m\,a\,p-m\,p\,a\,p=m\,p^{\perp}\,a\,p=0$$ and thus \begin{equation}\label{sub3}\overline{[M\,\cl A]}^{\text{w}^{*}}\subseteq Y.\end{equation}  Since $\cl B$ is unital we get that $Y\subseteq \overline{[\cl B\,Y]}^{\text{w}^{*}}=\overline{[M\,\cl A\,M^{*}\,Y]}^{\text{w}^{*}}.$ If $p\in\Lat{\cl A}$ and $m\in M,\,y\in Y$ then $$p^{\perp}\,m^{*}\,y\,p=m^{*}\,\theta(p)^{\perp}\,y\,p=0$$ which means that $M^{*}\,Y\subseteq \Alg{\Lat{\cl A}}=\cl A.$ Therefore \begin{equation}
    \label{sub4}Y\subseteq \overline{[M\,\cl A\,M^*\,Y]}^{\text{w}^{*}}\subseteq \overline{[M\,\cl A\,\cl A]}^{\text{w}^*}=\overline{[M\,\cl A]}^{\text{w}^*}.\end{equation}
By (\ref{sub3}) and (\ref{sub4}) we deduce that $Y=\overline{[M\,\cl A]}^{\text{w}^{*}}$ and as a consequence, $Y$ is a concrete strongly projectively right w*-rigged module over $\cl A.$
\end{proof}

By combining Theorem \ref{proj-lat} and Remark \ref{nest-lat} we take the following.
\begin{cor}
\label{proj-nest}
Let $\cl N$ be a nest acting on the Hilbert space $H$ with corresponding nest algebra $\,\cl A=\Alg{\cl N}\subseteq \mathbb{B}(H)$ and let $\,Y$ be a dual operator space. The following are equivalent.
\begin{enumerate}[(i)]
    \item $\,Y$ is a concrete strongly projectively right $\mathrm{w^*}$-rigged module over $\cl A$.
    \item There exist a nest $\cl M$ acting on some Hilbert space $K$ and a $*$-isomorphism $\theta:\cl N^{\prime \prime}\to \cl M^{\prime \prime}$ such that $\theta(\cl N)=\cl M$ and $$Y=\left\{y\in\mathbb{B}(H,K)\mid \theta(p)^{\perp}\,y\,p=0,\,\forall\,p\in\cl N\right\}.$$ 
\end{enumerate}
\end{cor}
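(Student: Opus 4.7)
The plan is to derive this corollary directly from Theorem \ref{proj-lat} by translating its conclusions into the nest-algebra language via Remark \ref{nest-lat}. The only genuine content to check is that the reflexive algebra $\cl B$ produced by Theorem \ref{proj-lat} in the forward direction is in fact a nest algebra; this will follow from the fact that a $*$-isomorphism of von Neumann algebras preserves order on projections.

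\textbf{Forward direction $(i)\Rightarrow(ii)$.} First I would invoke Remark \ref{nest-lat} to conclude that $\cl A=\Alg{\cl N}$ is reflexive with $\Lat{\cl A}=\cl N$ and $\Delta(\cl A)'=\cl N''$. Theorem \ref{proj-lat} then supplies a reflexive operator algebra $\cl B\subseteq \mathbb{B}(K)$ and a $*$-isomorphism $\theta:\Delta(\cl A)'\to \Delta(\cl B)'$ with $\theta(\Lat{\cl A})=\Lat{\cl B}$ and $Y=\{y\in\mathbb{B}(H,K)\mid \theta(p)^{\perp}\,y\,p=0,\,\forall p\in\Lat{\cl A}\}$. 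Since $\cl N=\Lat{\cl A}$ is totally ordered and $\theta$, being a $*$-isomorphism, sends projections to projections and preserves the relation $p\le q$ (as $p=pq$ implies $\theta(p)=\theta(p)\theta(q)$), the set $\cl M:=\theta(\cl N)=\Lat{\cl B}$ is a totally ordered family of projections containing $0$ and $I_{K}$. Standard arguments (or reflexivity of $\cl B$) show that $\cl M$ is closed under arbitrary intersections and closed spans, so $\cl M$ is a nest on $K$ and $\cl B=\Alg{\cl M}$ is the corresponding nest algebra. Applying Remark \ref{nest-lat} to $\cl B$ yields $\Delta(\cl B)'=\cl M''$. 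Hence $\theta$ restricts (in fact is) a $*$-isomorphism $\cl N''\to\cl M''$ with $\theta(\cl N)=\cl M$, and the formula for $Y$ is precisely the one in $(ii)$.

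\textbf{Reverse direction $(ii)\Rightarrow(i)$.} Conversely, given a nest $\cl M$ on $K$ and a $*$-isomorphism $\theta:\cl N''\to\cl M''$ with $\theta(\cl N)=\cl M$, I would set $\cl B=\Alg{\cl M}$. By Remark \ref{nest-lat}, $\cl B$ is a reflexive operator algebra with $\Lat{\cl B}=\cl M$ and $\Delta(\cl B)'=\cl M''$. Combined with $\Delta(\cl A)'=\cl N''$ and $\Lat{\cl A}=\cl N$, the map $\theta$ becomes a $*$-isomorphism $\Delta(\cl A)'\to\Delta(\cl B)'$ satisfying $\theta(\Lat{\cl A})=\Lat{\cl B}$, and the defining formula for $Y$ coincides with that of condition $(ii)$ in Theorem \ref{proj-lat}. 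Thus Theorem \ref{proj-lat} guarantees that $Y$ is a concrete strongly projectively right $\mathrm{w}^*$-rigged module over $\cl A$, which completes the proof.

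The only mildly nontrivial step is verifying that $\cl B$ in the forward direction is a nest algebra. This reduces to showing that $\theta(\cl N)$ is a nest; totality of the order follows from $\theta$ preserving the projection order, while containment of $0,I_K$ and closure under intersections/closed spans can be read off from the fact that $\theta(\cl N)=\Lat{\cl B}$ is automatically a lattice and that $\theta$ is a w*-homeomorphism on its domain. No further obstacle is expected.
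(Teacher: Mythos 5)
Your proposal is correct and follows exactly the route the paper intends: the paper derives this corollary in one line by combining Theorem \ref{proj-lat} with Remark \ref{nest-lat}, and your argument simply fills in the (correct) details, including the verification that $\theta(\cl N)=\Lat{\cl B}$ is a nest because $*$-isomorphisms preserve the order on projections and $\Lat{\cl B}$ is automatically a lattice.
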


\section{Morita equivalence}
Throughout this section we develop a theory of Morita equivalence for right w*-rigged modules over unital dual operator algebras.

Let $\cl B$ be a unital dual operator algebra and let $Y$ be a right w*-rigged module over $\cl B.$ By \cite[Lemma 3.2]{Ble-Kas} we get that $\tilde{Y}$ is a left w*-rigged module over $\cl B$ and the map $(\cdot\,,\cdot):\tilde{Y}\times Y\to \cl B$ which sends every $(f,y)\in\tilde{Y}\times Y$ to $f(y)\in\cl B$ is separately w*-continuous and completely contractive. From the discussion above \cite[Theorem 3.5]{Ble-Kas}, the space $\cl A=Y\otimes_{\cl B}^{\sigma h}\tilde{Y}$ is a dual operator algebra with identity of norm 1 and by \cite[Definition 4.3]{Ble-Kas}, $\,Y$ is a non-degenerate left module over $\cl A$ as well as $\tilde{Y}$ is a non-degenerate right module over $\cl A.$  

We recall from Definition \ref{bmaps} that for a right w*-rigged module $Y$ over a unital dual operator algebra $\cl B$ we denote by $B(Y)$ the dual operator space of all w*-continuous and completely bounded right $\cl B$-module maps from $Y$ to $Y.$ In \cite[Theorem 3.5]{Ble-Kas} it is proven that $\cl A\cong B(Y)$ as dual operator algebras and thus the induced non-degenerate left action of $\cl A$ on $Y$ is the following $$\cl A\times Y\to Y,\,(a,y)\mapsto a\,y=\theta(a)(y),$$ where $\theta:Y\otimes_{\cl B}^{\sigma h}\tilde{Y}\to B(Y)$ is the isomorphism proved in the above theorem such that $$\theta(y\otimes_{\cl B}f)(y')=y\,f(y'),\,y\in Y,\,f\in\tilde{Y}.$$
Furthermore, $B(Y)$ is an $\cl A$-$\cl A$-bimodule with actions $$a\,T:Y\to Y,\,(a\,T)(y)=a\,T(y),\,\forall\,a\in\cl A,\,\forall\,T\in B(Y)$$ $$T\,a:Y\to Y,\,(T\,a)(y)=T(a\,y),\,\forall\,a\in\cl A,\,\forall\,T\in B(Y).$$
We claim that $\theta$ is also a bimodule isomorphism in a canonical way. Since $\theta$ is w*-continuous it suffices to prove that $$\theta(a\,(y\otimes_{\cl B}f))=a\,\theta(y\otimes_{\cl B}f)\,\,\,\text{and}\,\,\,\theta((y\otimes_{\cl B}f)\,a)=\theta(y\otimes_{\cl B}f)\,a,$$ where $a\in\cl A,\,y\in Y,\,f\in\tilde{Y}.$ Indeed, for every $y'\in Y$ we have that \begin{align*}
    \theta(a\,(y\otimes_{\cl B}f))(y')&=\theta(a\,y\otimes_{\cl B}f)(y')\\&=a\,y\,f(y')\\&=a\,(y\,f(y'))\\&=(a\,\theta(y\otimes_{\cl B}f))(y')
\end{align*}
and \begin{align*}
    \theta((y\otimes_{\cl B}f)\,a)(y')&=\theta(y\otimes_{\cl B}f\,a)(y')\\&=y\,(f\,a)(y')\\&=y\,f(a\,y')\\&=\theta(y\otimes_{\cl B}f)(a\,y')\\&=(\theta(y\otimes_{\cl B}f)\,a)(y').
\end{align*}

\begin{rem}
\label{explain}
Let $\cl A$ and $\cl B$ be unital dual operator algebras and $Y$ be a right w*-rigged module over $\cl B$ such that $\cl A\cong Y\otimes_{\cl B}^{\sigma h}\tilde{Y}$ as dual operator $\cl A$-bimodules (that is, via a completely isometric w*-homeomorphism which is also an $\cl A$-bimodule map) and also
$\cl B\cong \tilde{Y}\otimes_{\cl A}^{\sigma h}Y$ as dual operator $\cl B$-bimodules. 

Therefore, the $\cl B$-$\cl A$-bimodule $\tilde{Y}$ is a w*-Morita equivalence bimodule in the sense of \cite{Ble-Kas-2} and by the last paragraph above \cite[Definition 1.3]{Ble-Kas-rigged} we deduce that $\tilde{Y}$ is a right w*-rigged module over $\cl A$. %From the discussion above we have that $\cl A\cong B(Y)$ and $\cl B\cong B(\tilde{Y})$ as dual operator algebras.
\end{rem}

Now we can give the definition of Morita equivalence for right w*-rigged modules.
\begin{defn}
\label{mor-eq}
Let $\cl A$ and $\cl B$ be dual operator algebras, $E$ be a right w*-rigged module over $\cl A$ and $F$ be a right w*-rigged module over $\cl B$. We call $E,\,F$ \emph{Morita equivalent} if there exists a right w*-rigged module $Y$ over $\cl B$ such that 
\begin{enumerate}[(i)]
    \item $\cl A\cong Y\otimes_{\cl B}^{\sigma h}\tilde{Y}$ as dual operator $\cl A$-$\cl A$-bimodules.
    \item $\cl B\cong \tilde{Y}\otimes_{\cl A}^{\sigma h}Y$ as dual operator $\cl B$-$\cl B$-bimodules.
    \item $F\cong E\otimes_{\cl A}^{\sigma h}Y$ as right w*-rigged modules over $\cl B$ and $E\cong F\otimes_{\cl B}^{\sigma h}\tilde{Y}$ as right w*-rigged modules over $\cl A.$
\end{enumerate}
\end{defn}

\begin{rem}
If $E,\,F,\,\cl A,\,\cl B$ are as in the Definition above, then by (i), (ii), Remark \ref{explain} and the discussion above it we get that $\cl A\cong B(Y),\,\cl B\cong B(\tilde{Y})$ and also $\cl A,\,\cl B$ are w*-Morita equivalent in the sense of \cite{Ble-Kas-2} with equivalence bimodules $Y$ and $\tilde{Y}.$
\end{rem}

\begin{rem}
\label{not-st}
Let $E,\,F,\,\cl A,\,\cl B$ and $\,Y$ be as in the Definition \ref{mor-eq}. We observe that in the condition (iii) it suffices to prove one of the two isomorphisms since either of them implies the other one. Indeed, suppose that $F\cong E\otimes_{\cl A}^{\sigma h}Y$ as right w*-rigged modules over $\cl B.$ Using the condition (i), the associativity of the normal Haagerup tensor product and the fact that $E$ is a non-degenerate $\cl A$-right module we have that $$F\otimes_{\cl B}^{\sigma h}\tilde{Y}\cong \left(E\otimes_{\cl A}^{\sigma h}Y\right)\otimes_{\cl B}^{\sigma h}\tilde{Y}\cong E\otimes_{\cl A}^{\sigma h}\left(Y\otimes_{\cl B}^{\sigma h}\tilde{Y}\right)\cong E\otimes_{\cl A}^{\sigma h}\cl A\cong E.$$ 
Similarly, if $E\cong F\otimes_{\cl B}^{\sigma h}\tilde{Y}$ as right w*-rigged modules over $\cl A$ then by (ii), the associativity of the normal Haagerup tensor product and the fact that $F$ is a non-degenerate $\cl B$-right module we get that $F\cong E\otimes_{\cl A}^{\sigma h}Y$ as right w*-rigged modules over $\cl B.$ 

However, $E$ and $F$ are not always stably isomorphic. Indeed, by \cite[Example 3.7]{Ele-ref} there exist isomorphic nests $\cl N$ and $\cl M$ such that the nest algebras $\cl A=\Alg{\cl N}$ and $\cl B=\Alg{\cl M}$ are not stably isomorphic (since they are not $\Delta$-equivalent). Let $\theta:\cl N\to \cl M$ be a nest isomorphism. If $Y=Op(\theta)$ then by Remark \ref{for-ref} we have that $\tilde{Y}\cong X=Op(\theta^{-1})$ and also $\cl A\cong Y\otimes_{\cl B}^{\sigma h}\tilde{Y},\,\cl B\cong \tilde{Y}\otimes_{\cl A}^{\sigma h}Y.$ By defining $E=\cl A$ over $\cl A$ and $F=Y$ over $\cl B$ we have that $E$ and $F$ are Morita equivalent in our sense but not stably isomorphic.
\end{rem}

\begin{defn}
\label{d-mor}
Let $\cl A$ and $\cl B$ be dual operator algebras, $E$ be a right w*-rigged module over $\cl A$ and $F$ be a right w*-rigged module over $\cl B$. We call $E,\,F$ \emph{strongly projectively Morita equivalent} if there exists a strongly projectively right w*-rigged module $Y$ over $\cl B$ such that \begin{enumerate}[(i)]
    \item $\cl A\cong Y\otimes_{\cl B}^{\sigma h}\tilde{Y}$ as dual operator $\cl A$-$\cl A$-bimodules.
    \item $\cl B\cong \tilde{Y}\otimes_{\cl A}^{\sigma h}Y$ as dual operator $\cl B$-$\cl B$-bimodules.
    \item $F\cong E\otimes_{\cl A}^{\sigma h}Y$ as right w*-rigged modules over $\cl B$ and $E\cong F\otimes_{\cl B}^{\sigma h}\tilde{Y}$ as right w*-rigged modules over $\cl A.$
\end{enumerate}
\end{defn}

\begin{rem}
\label{st}
If $\cl A,\,\cl B,\,E,\,F$ and $Y$ are as in the Definition \ref{d-mor}, then since $Y$ is a strongly projectively right w*-rigged module over $\cl B,$ there exist a w*-continuous completely isometric unital homomorphism $\beta:\cl B\to \mathbb{B}(H)$ and a w*-closed TRO $M\subseteq \mathbb{B}(H,K)$ such that $M^{*}\,M\subseteq \beta(\cl B),\,1_{\overline{[M^{*}\,M]}^{\text{w}^{*}}}=1_{\beta(\cl B)}$ and $Y\cong \overline{[M\,\beta(\cl B)]}^{\text{w}^{*}}$ as right w*-rigged modules over $\cl B.$ Define the unital dual operator algebra $\cl C=\overline{[M\,\beta(\cl B)\,M^{*}]}^{\text{w}^{*}}.$ Since $M^*\,M\subseteq \beta(\cl B)$ and $\overline{[M^*\,M]}^{\text{w}^*},\,\beta(\cl B)$ share the same unit we have that $$\overline{[M^*\,M\,\beta(\cl B)]}^{\text{w}^*}=\beta(\cl B)=\overline{[\beta(\cl B)\,M^*\,M]}^{\text{w}^*}$$ which implies that $$\overline{[M^*\,\cl C\,M]}^{\text{w}^*}=\overline{[M^*\,M\,\beta(\cl B)\,M^*\,M]}^{\text{w}^*}=\beta(\cl B).$$ Therefore, the algebras $\beta(\cl B)$ and $\cl C$ are TRO-equivalent via $M.$ By \cite[Theorem 2.4]{Ele-Paul}, for the $M$-generated $\cl B$-$\cl C$ bimodules $$U=\overline{[\beta(\cl B)\,M^*]}^{\text{w}^*}\cong \tilde{Y},\,\,V=\overline{[M\,\beta(\cl B)]}^{\text{w}^*}\cong Y$$ it holds that $$\beta(\cl B)\cong U\otimes_{\cl C}^{\sigma h}V,\,\,\cl C\cong V\otimes_{\beta(\cl B)}^{\sigma h}U\cong V\otimes_{\cl B}^{\sigma h}U$$
Therefore, $\cl C\cong V\otimes_{\cl B}^{\sigma h}U\cong Y\otimes_{\cl B}^{\sigma h}\tilde{Y}\cong \cl A.$ Since $\beta(\cl B)$ and $\cl C$ are TRO-equivalent and $\cl C\cong \cl A,$ we deduce that $\cl A$ and $\cl B$ are $\Delta$-equivalent, i.e $\cl A$ and $\cl B$ are stably isomorphic, \cite[Theorem 3.2]{Ele-Paul}.
\end{rem}

\begin{thm}
\label{stably isom}
Let $E$ and $F$ be right $\mathrm{w^{*}}$-rigged modules over the unital dual operator algebras $\cl A$ and $\cl B$ respectively. If $\,E$ and $F$ are strongly projectively Morita equivalent then $E$ and $F$ are stably isomorphic.
\end{thm}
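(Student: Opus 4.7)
The plan is to prove $E\sim_{\Delta}F$ in the sense of Definition \ref{delta}, from which stable isomorphism follows immediately from the Eleftherakis--Paulsen--Todorov characterization quoted in Section 2 (namely, two dual operator spaces are stably isomorphic iff they are $\Delta$-equivalent). The TRO implementing this equivalence will be read off directly from the strongly projective structure of the bridging module $Y$ unpacked in Remark \ref{st}.

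First I will apply Remark \ref{st} to extract a concrete realization $\cl B\cong \beta(\cl B)\subseteq \mathbb{B}(H)$ together with a w*-closed TRO $M\subseteq \mathbb{B}(H,K)$ satisfying $M^{*}\,M\subseteq \beta(\cl B)$ and $1_{\overline{[M^{*}\,M]}^{\text{w}^{*}}}=1_{\beta(\cl B)}$, for which $Y\cong V:=\overline{[M\,\beta(\cl B)]}^{\text{w}^{*}}$ as right w*-rigged $\cl B$-modules and $\cl A\cong \cl C:=\overline{[M\,\beta(\cl B)\,M^{*}]}^{\text{w}^{*}}\subseteq \mathbb{B}(K)$ as dual operator algebras. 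Identifying $\cl A$ with $\cl C$, I will then pick a concrete realization $E\subseteq \mathbb{B}(K,L)$ of $E$ as a right $\cl C$-module (available from the representation theory of w*-rigged modules in \cite{Ble-Kas,Ble-Kr}) and use the standard description of the normal Haagerup tensor product over a concretely realized algebra \cite{BleLeM04,Ble-Kas-2} to identify the dual operator space $F\cong E\otimes_{\cl A}^{\sigma h}Y$ with $F':=\overline{[E\,V]}^{\text{w}^{*}}\subseteq \mathbb{B}(H,L)$.

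The heart of the argument is then the observation that $V$ is itself a w*-closed TRO and that the pair $(M_{1},M_{2})=(V,\,\mathbb{C}\,I_{L})$ is a TRO-equivalence in the sense of Definition \ref{TRO} between $E\subseteq \mathbb{B}(K,L)$ and $F'\subseteq \mathbb{B}(H,L)$. Because $M^{*}\,M\subseteq \beta(\cl B)$, a one-line computation gives
\[
V\,V^{*}\,V\subseteq \overline{[M\,\beta(\cl B)\,M^{*}\,M\,\beta(\cl B)]}^{\text{w}^{*}}\subseteq \overline{[M\,\beta(\cl B)]}^{\text{w}^{*}}=V,
\]
and the same type of manipulation yields $\overline{[V\,V^{*}]}^{\text{w}^{*}}=\overline{[M\,\beta(\cl B)\,M^{*}]}^{\text{w}^{*}}=\cl C$. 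The equality $F'=\overline{[E\,V]}^{\text{w}^{*}}$ is the definition, while
\[
\overline{[F'\,V^{*}]}^{\text{w}^{*}}=\overline{[E\,V\,V^{*}]}^{\text{w}^{*}}=\overline{[E\,\cl C]}^{\text{w}^{*}}=E,
\]
the last equality using non-degeneracy of the right $\cl C$-action on $E$. These are exactly the two identities required by Definition \ref{TRO}, so $E\sim_{\Delta}F'\cong F$, and the theorem follows.

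The step where I expect to have to be most careful is the identification of the abstract tensor product $E\otimes_{\cl A}^{\sigma h}Y$ with the concrete w*-closed product $\overline{[E\,V]}^{\text{w}^{*}}$. This needs the concrete left $\cl C$-action on $V\subseteq \mathbb{B}(H,K)$ to match the abstract left $\cl A$-action on $Y$ coming from $\cl A\cong Y\otimes_{\cl B}^{\sigma h}\tilde{Y}$ under the transfers $\cl A\cong \cl C$ and $Y\cong V$, and the bilinear map $E\times V\to \mathbb{B}(H,L)$ to be a separately w*-continuous, $\cl C$-balanced, completely contractive map representing $\otimes_{\cl C}^{\sigma h}$; both points are in the nature of the normal Haagerup tensor product and of the $M$-generated bimodules appearing in Remark \ref{st} (see \cite{Ble-Kas-2,Ele-Paul}), so once these compatibilities are pinned down the rest of the argument is formal.
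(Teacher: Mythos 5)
There is a genuine error at the heart of your argument: $V=\overline{[M\,\beta(\cl B)]}^{\text{w}^*}$ is \emph{not} a w*-closed TRO unless $\beta(\cl B)$ is self-adjoint. Your displayed computation of $V\,V^{*}\,V$ silently replaces $\beta(\cl B)\,\beta(\cl B)^{*}$ by $\beta(\cl B)$: correctly one only gets $V\,V^{*}\,V\subseteq \overline{[M\,\beta(\cl B)\,\beta(\cl B)^{*}\,M^{*}\,M\,\beta(\cl B)]}^{\text{w}^*}$, and the factor $\beta(\cl B)\,\beta(\cl B)^{*}$ cannot be absorbed into $\beta(\cl B)$ for a non-self-adjoint algebra. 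Concretely, take $M=\mathbb{C}I_H$ and $\cl B$ the upper triangular $2\times 2$ matrices: then $V=\cl B$ and $e_{22}\,e_{12}^{*}\,e_{11}=e_{21}\notin\cl B$, so $V\,V^{*}\,V\not\subseteq V$. For the same reason $\overline{[V\,V^{*}]}^{\text{w}^*}=\overline{[M\,\beta(\cl B)\,\beta(\cl B)^{*}\,M^{*}]}^{\text{w}^*}$ contains $\cl C^{*}$ and is in general strictly larger than $\cl C$ (in the example it is all of $\mathbb{M}_{2}$). Hence the pair $(V,\mathbb{C}I_{L})$ does not satisfy Definition \ref{TRO}, and the claimed TRO-equivalence between $E$ and $F'$ collapses.

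The strategy is repairable, but not as written. The correct implementing TRO is $M$ itself: using $\overline{[\beta(\cl B)\,M^{*}\,M]}^{\text{w}^*}=\beta(\cl B)$ and the non-degeneracy of $E$ over $\cl C$ one checks $\overline{[E\,M]}^{\text{w}^*}=\overline{[E\,V]}^{\text{w}^*}$ and $\overline{[E\,M\,M^{*}]}^{\text{w}^*}=\overline{[E\,\cl C]}^{\text{w}^*}=E$, so $(M,\mathbb{C}I_{L})$ does give a TRO-equivalence between $E$ and $F'$. Even after this fix you would still owe a proof that $E\otimes_{\cl A}^{\sigma h}Y\cong\overline{[E\,V]}^{\text{w}^*}$ completely isometrically and w*-homeomorphically; you flag this as a compatibility to be ``pinned down'', but it is precisely the kind of statement the paper has to labour over (compare the proof of Lemma \ref{tensor}), and it is not automatic for an abstract w*-rigged $E$. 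The paper's own proof avoids all of this: it never realizes $E$ concretely, but instead invokes \cite[Lemma 3.1]{Ele-Paul} to get $R_{I}^{w}(\tilde{Y})\cong R_{I}^{w}(\cl B)$ and then computes $\mathbb{M}_{I}(E)\cong C_{I}^{w}(F)\otimes_{\cl B}^{\sigma h}R_{I}^{w}(\tilde{Y})\cong C_{I}^{w}(F)\otimes_{\cl B}^{\sigma h}R_{I}^{w}(\cl B)\cong\mathbb{M}_{I}(F)$ via \cite[Lemma 2.7]{Ble-Kas-2}, which yields stable isomorphism directly from Definition \ref{stable}.
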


\begin{proof}
Let $E$ and $F$ be strongly projectively Morita equivalent. There exists a strongly projectively right w*-rigged module $Y$ over $\cl B$ such that $$\cl A\cong Y\otimes_{\cl B}^{\sigma h}\tilde{Y},\,\cl B\cong \tilde{Y}\otimes_{\cl A}^{\sigma h}Y,\,F\cong E\otimes_{\cl A}^{\sigma h}Y.$$ Since $Y$ is a strongly projectively right w*-rigged module over $\cl B,$ there exist a w*-continuous completely isometric unital homomorphism $\beta:\cl B\to \mathbb{B}(H)$ and a w*-closed TRO $M\subseteq \mathbb{B}(H,K)$ such that $M^{*}\,M\subseteq \beta(\cl B),\,1_{\overline{[M^*\,M]}^{\text{w}^*}}=1_{\beta(\cl B)}$ and $Y\cong \overline{[M\,\beta(\cl B)]}^{\text{w}^{*}}.$ By Remark \ref{st} it follows that the algebras $\cl C=\overline{[M\,\beta(\cl B)\,M^*]}^{\text{w}^*}$ and $\beta(\cl B)$ are TRO-equivalent via $M.$ By \cite[Lemma 3.1]{Ele-Paul},  for the $\beta(\cl B)$-$\cl C$-bimodule $\tilde{Y}\cong \overline{[\beta(\cl B)\,M^{*}]}^{\text{w}^*}$ we have that $$R_{I}^{w}(\tilde{Y})\cong R_{I}^{w}(\beta(\cl B))\cong R_{I}^{w}(\cl B),$$ where $I$ is an infinite indexed set. Since $F\cong E\otimes_{\cl A}^{\sigma h}Y,$  by Remark \ref{not-st} we get that $E\cong F\otimes_{\cl B}^{\sigma h}\tilde{Y}$ as $\cl A$-right w*-rigged modules and from \cite[Lemma 2.7]{Ble-Kas-2} we deduce that \begin{align*}
   \mathbb{M}_{I}(E)&\cong \mathbb{M}_{I}\left(F\otimes_{\cl B}^{\sigma h}\tilde{Y}\right)\\&\cong C_{I}^{w}(F)\otimes_{\cl B}^{\sigma h}R_{I}^{w}(\tilde{Y})\\&\cong C_{I}^{w}(F)\otimes_{\cl B}^{\sigma h}R_{I}^{w}(\cl B)\\&\cong \mathbb{M}_{I}(F\otimes_{\cl B}^{\sigma h}\cl B)\\&\cong \mathbb{M}_{I}(F).
\end{align*}
Therefore, $E$ and $F$ are stably isomorphic.
\end{proof}

\begin{ex}
We provide an example of strongly projectively Morita equivalent w*-rigged modules. Let $M\subseteq \mathbb{B}(H,K)$ be a w*-closed TRO, where $H$ and $K$ are Hilbert spaces and let $E$ be a right w*-rigged module over the unital dual operator algebra $\cl A=\overline{[M\,M^{*}]}^{\text{w}^*}\subseteq \mathbb{B}(K).$ By defining $F=E\otimes_{\cl A}^{\sigma h}M$ we have that $F$ is a right w*-rigged module over $\cl B=\overline{[M^*\,M]}^{\text{w}^*}\subseteq \mathbb{B}(H)$ (see Remark \ref{bimodule}) and $E,\,F$ are strongly projectively Morita equivalent with equivalence bimodules $Y=M=\overline{[M\,\cl B]}^{\text{w}^*}$ and $\tilde{Y}\cong M^*=\overline{[\cl B\,M^*]}^{\text{w}^*}.$
\end{ex}

\section{Morita equivalence of w*-rigged modules over nest algebras}

As mentioned in the introduction, in this section we study the Morita equivalence and the strongly projectively Morita equivalence between right w*-rigged modules over nest algebras. The main theorems of this section are Theorem \ref{morita} and Theorem \ref{d-morita}. Before we state them, we prove some useful lemmas. For the first lemma we recall the Remark \ref{map}, where for a subspace $Y\subseteq \mathbb{B}(H,K)$ we denote by $\Map{Y}$ the map which sends every projection $p\in\pr{\mathbb{B}(H)}$ to the projection of $K$ generated by the vectors of the form $y\,p\,(\xi)$ where $y\in Y$ and $\xi\in H$.

\begin{lem}
\label{help}
Let $\cl M$ be a nest acting on some Hilbert space $K$ and let $\Alg{\cl M}$ be the corresponding nest algebra. If $p\in\cl M$ and $q$ is the projection onto $\overline{[\Alg{\cl M}\,p(K)]}^{||\cdot||}=\overline{[x\,p(\xi)\in K\mid x\in\Alg{\cl M},\,\xi\in K]}^{||\cdot||}$ then $p=q.$
\end{lem}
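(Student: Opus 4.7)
The plan is to establish the equality $p=q$ by showing the two projection inequalities $p\le q$ and $q\le p$, which is equivalent to $p(K)=q(K)$.

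For the inequality $p\le q$, I would simply note that $\Alg{\cl M}$ is unital, so $I_K\in\Alg{\cl M}$. Consequently, for every $\xi\in K$, the vector $p(\xi)=I_K\,p(\xi)$ belongs to the set $\{x\,p(\eta)\mid x\in\Alg{\cl M},\,\eta\in K\}$, and hence to its closed linear span $q(K)$. This gives $p(K)\subseteq q(K)$.

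For the reverse inequality $q\le p$, the crucial observation is the defining relation of the nest algebra: since $p\in\cl M$, every $x\in\Alg{\cl M}$ satisfies $p^{\perp}\,x\,p=0$, equivalently $x\,p=p\,x\,p$. Therefore for any $x\in\Alg{\cl M}$ and $\xi\in K$,
\[
x\,p(\xi)=p\bigl(x\,p(\xi)\bigr)\in p(K).
\]
Taking closed linear spans yields $q(K)\subseteq p(K)$, so $q\le p$.

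Combining the two inclusions gives $p=q$. The argument is direct and relies solely on two facts: unitality of $\Alg{\cl M}$ and the characterization $\Alg{\cl M}=\{x\in\mathbb{B}(K)\mid p^{\perp}xp=0,\,\forall p\in\cl M\}$, so there is no real obstacle; the only point to keep in mind is that $p(K)$ is automatically norm-closed as the range of an orthogonal projection, which legitimizes passing from $x\,p(\xi)\in p(K)$ to the closure being contained in $p(K)$.
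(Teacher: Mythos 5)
Your proof is correct and follows exactly the paper's argument: $p\le q$ from unitality of $\Alg{\cl M}$, and $q\le p$ from $p^{\perp}xp=0$, i.e. $xp=pxp$, together with closedness of $p(K)$. Nothing to add.
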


\begin{proof} 
    Since $I_{K}\in\Alg{\cl M}$ it follows that $p\leq q.$ On the other hand it holds that $p^{\perp}\,\Alg{\cl M}\,p=0$ which implies that $$\Alg{\cl M}\,p(K)=p\,\Alg{\cl M}\,p(K)\subseteq p(K)$$ that is $q\leq p.$ We deduce that $p=q.$
\end{proof}

\begin{lem}
\label{restriction}
Let $\cl N,\,\cl M$ be nests acting on the Hilbert spaces $H,\,K$ respectively. If $\phi:\cl N\to \cl M$ is a continuous onto nest homomorphism and $$U=Op(\phi)=\left\{u\in\mathbb{B}(H,K)\mid \phi(p)^{\perp}\,u\,p=0,\,\forall\,p\in\cl N\right\}$$ then $\Map{U}|_{\cl N}=\phi.$
\end{lem}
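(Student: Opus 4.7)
The plan is to prove $\Map{U}(p) \leq \phi(p)$ and $\phi(p) \leq \Map{U}(p)$ separately for each $p \in \cl N$. The first inclusion is immediate: any $u \in U = Op(\phi)$ satisfies $u p = \phi(p)\,u p$ by definition of $Op(\phi)$, so $u p(H) \subseteq \phi(p)(K)$, whence $V_p := \overline{[U p(H)]}^{\|\cdot\|} \subseteq \phi(p)(K)$ and the projection onto $V_p$ is dominated by $\phi(p)$.

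For the reverse inclusion, I would first exploit the bimodule structure: $U$ is an $\Alg{\cl M}$-$\Alg{\cl N}$-bimodule by \cite[Theorem 2.1]{Ele-app} (as recalled above Remark \ref{for-ref}), so $\Alg{\cl M}\,V_p \subseteq V_p$. Hence the projection $q := \Map{U}(p)$ onto $V_p$ is $\Alg{\cl M}$-invariant, which by Remark \ref{nest-lat} forces $q \in \Lat{\Alg{\cl M}} = \cl M$. So $q \in \cl M$ with $q \leq \phi(p)$, and it remains to show $q = \phi(p)$. I plan to do this by induction on the lattice $\cl N$, distinguishing a \emph{jump case} where $p$ has an immediate predecessor $p^- \in \cl N$, and a \emph{limit case} where $p = \sup\{p' \in \cl N : p' < p\}$.

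In the jump case, I would construct rank-one operators $u = \eta \otimes \xi^* \in U$ with $\eta \in (\phi(p) - \phi(p^-))(K)$ and nonzero $\xi \in (p - p^-)(H)$. Checking $u \in U$ is direct: for any $p'' \in \cl N$ either $p'' \leq p^-$ (so $p''\xi = 0$) or $p'' \geq p$ (so $\phi(p'')\eta = \eta$), since no nest element lies strictly between $p^-$ and $p$. Then $u p \xi = \|\xi\|^2 \eta \in V_p$, so $(\phi(p) - \phi(p^-))(K) \subseteq V_p$; combining with the inductive hypothesis $\phi(p^-)(K) \subseteq V_{p^-} \subseteq V_p$ yields $\phi(p)(K) \subseteq V_p$. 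In the limit case, the continuity of $\phi$ gives $\phi(p)(K) = \overline{\bigcup_{p' < p} \phi(p')(K)}^{\|\cdot\|}$, and the inductive hypothesis together with $V_{p'} \subseteq V_p$ and closedness of $V_p$ delivers $\phi(p)(K) \subseteq V_p$.

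The main obstacle is executing the induction rigorously on a nest that may be uncountable and not well-ordered. I plan to handle this via a maximality argument: let $S = \{p \in \cl N : \phi(p')(K) \subseteq V_{p'}\text{ for all }p' \leq p \text{ in } \cl N\}$; one verifies that $S$ contains $0$ and is closed downward, under immediate successors (via the jump step), and under suprema of chains in $\cl N$ (via the limit step). If $p^* := \sup S$ were strictly below $I_H$, one of these closure properties — jump if $p^*$ has an immediate successor, limit if $p^*$ is approached from above — would extend $S$ past $p^*$, with the continuity and onto-ness of $\phi$ bridging the delicate situation when $p^*$ is itself a limit from above (as occurs for continuous nests). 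This forces $S = \cl N$ and completes the proof.
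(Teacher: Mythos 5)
Your upper bound $\Map{U}(p)\leq\phi(p)$ and your verification that the rank-one operators $\eta\otimes\xi^{*}$ belong to $U$ are both correct, but the inductive scheme you build on them has a genuine gap: it cannot reach any nonzero element of a \emph{continuous} nest. Take $\cl N$ order-isomorphic to $[0,1]$ (e.g.\ the Volterra nest), so that no $p\in\cl N$ has an immediate predecessor or successor. Your jump step then never applies, and your limit step only certifies a projection $p$ once all $p'<p$ already lie in $S$; since every such $p'$ is itself a limit from below, the recursion never bottoms out and $S=\{0\}$. The maximality argument does not repair this: with $p^{*}=\sup S$, the set $\{p''>p^{*}\}$ has infimum $p^{*}$ but no minimal element, and closure of $S$ under suprema of chains says nothing about projections strictly above $p^{*}$, none of which is a supremum of elements of $S$. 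The sentence about continuity and onto-ness ``bridging the delicate situation'' is precisely the missing argument; a jump/limit transfinite induction of this kind only succeeds on well-ordered nests.

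The rank-one idea itself is salvageable, and in fact makes the induction unnecessary. For $e\in\cl N$ set $e_{-}=\sup\{q\in\cl N: q<e\}$. If $\eta\in\phi(e)(K)$ and $\xi\in e_{-}(H)^{\perp}$, then $\eta\otimes\xi^{*}\in U$ (for $q<e$ one has $q\,\xi=0$, while for $q\geq e$ one has $\phi(q)\,\eta=\eta$), and whenever $e_{-}<p$ one may choose such a $\xi$ with $p\,\xi\neq0$, so that $\phi(e)(K)\subseteq V_{p}$. Since $\sup\{e\in\cl N: e_{-}<p\}=p$ and $\phi$ preserves suprema by continuity, this yields $\phi(p)\leq\Map{U}(p)$ for every $p$ directly. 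Note that this route is quite different from the paper's, which avoids rank-one operators altogether: it introduces the companion space $V=\{v\in\mathbb{B}(K,H)\mid p^{\perp}v\,\phi(p)=0,\ \forall p\in\cl N\}$, uses the factorization $\Alg{\cl M}=\overline{[U\,V]}^{\mathrm{w}^{*}}$ from \cite[Theorem 2.1]{Ele-app}, and concludes with Lemma \ref{help}. That argument transfers to situations where rank-one density is unavailable, whereas a corrected version of yours is more elementary but specific to nests.
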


\begin{proof}
Set $\phi'=\Map{U}.$ We will show that $\phi'(p)=\phi(p),\,\forall\,p\in\cl N.$ Denote by $V$ the space $$V=\left\{v\in\mathbb{B}(K,H)\mid p^{\perp}\,v\,\phi(p)=0,\,\forall\,p\in\cl N\right\}$$ and let $\psi=\Map{V}.$ By \cite[Theorem 2.1]{Ele-app} we have that $\Alg{\cl M}=\overline{[U\,V]}^{\text{w}^*}.$ Let $p\in\cl N.$ Since $\phi(p)^{\perp}\,U\,p=0$ it follows that $\phi(p)^{\perp}\,\phi'(p)=0$ and thus \begin{equation}
    \label{4.1}\phi'(p)\leq \phi(p).
\end{equation} On the other hand, $p^{\perp}\,V\,\phi(p)=0$ which implies that $V\,\phi(p)(K)\subseteq p$ and as a consequence \begin{equation}
    \label{4.2}\psi(\phi(p))\leq p.
\end{equation} For all $u\in U,\,v\in V$ we have that \begin{align*}
    u\,v\,\phi(p)&=u\,\psi(\phi(p))\,v\,\phi(p)\\&=\phi'(\psi(\phi(p)))\,u\,\psi(\phi(p))\,v\,\phi(p)\\&=\phi'(\psi(\phi(p)))u\,v\,\phi(p)
\end{align*} which means that \begin{equation}
    \label{4.3}\overline{[U\,V\,\phi(p)(K)]}^{||\cdot||}\subseteq \phi'(\psi(\phi(p)))(K).
\end{equation} 
Since $\Alg{\cl M}=\overline{[U\,V]}^{\text{w}^*}$ and $\phi(p)\in\cl M,$ from Lemma \ref{help}, $\phi(p)$ is the projection onto $\overline{[U\,V\,\phi(p)(K)]}^{||\cdot||}$ and  
thus by \ref{4.3} we get that $\phi(p)\leq \phi'(\psi(\phi(p)))$. Using \ref{4.2} we have that \begin{equation}
    \label{4.4}\phi(p)\leq \phi'(p).
\end{equation}
By combining \ref{4.1} and \ref{4.4} we deduce that $\phi(p)=\phi'(p).$
\end{proof}

\begin{lem}
\label{composition}

Let $\cl N,\,\cl M,\,\cl L$ be nests acting on the Hilbert spaces $H,\,K$ and $R$ respectively. Let also $\phi:\cl N\to \cl M,\,\zeta:\cl N\to \cl L$ and $\theta:\cl M\to \cl L$ be continuous onto nest homomorphisms. The following are equivalent.
\begin{enumerate}[(i)]
    \item $\zeta=\theta\circ \phi.$
    \item $Op(\zeta)=\overline{[Op(\theta)\,Op(\phi)]}^{\mathrm{w}^*}.$
\end{enumerate}

\end{lem}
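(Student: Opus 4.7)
The plan is to prove both implications, leveraging Lemma~\ref{restriction} at every step. For $(ii) \Rightarrow (i)$ I would apply Lemma~\ref{restriction} to each of $\phi,\,\theta,\,\zeta$, so that for every $p \in \cl N$ the projection $\zeta(p)$ is the one onto $\overline{[Op(\zeta)\,p(H)]}^{\|\cdot\|}$, and similarly for $\phi(p)$ and $\theta(q)$. Since by hypothesis $Op(\zeta) = \overline{[Op(\theta)\,Op(\phi)]}^{\text{w}^*}$, a Mazur-type argument (w*-convergence of operators yields weak convergence of their images on any fixed vector, and weak closure of a subspace equals its norm closure) shows $\overline{[Op(\zeta)\,p(H)]}^{\|\cdot\|} = \overline{[Op(\theta)\,Op(\phi)\,p(H)]}^{\|\cdot\|}$. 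Invoking Lemma~\ref{restriction} twice, once for $\phi$ to replace $\overline{[Op(\phi)\,p(H)]}^{\|\cdot\|}$ by $\phi(p)(K)$, and then for $\theta$ with the projection $\phi(p) \in \cl M$ to get $\theta(\phi(p))(R)$, one concludes that $\zeta(p) = \theta(\phi(p))$.

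For $(i) \Rightarrow (ii)$ the easy inclusion $\overline{[Op(\theta)\,Op(\phi)]}^{\text{w}^*} \subseteq Op(\zeta)$ is a one-line check: for $v \in Op(\theta),\,u \in Op(\phi),\,p \in \cl N$, using $u\,p = \phi(p)\,u\,p$, one computes $\zeta(p)^{\perp}\,v\,u\,p = \theta(\phi(p))^{\perp}\,v\,\phi(p)\,u\,p = 0$.

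The reverse inclusion is the main obstacle, and my plan is to import the ``dual module'' trick from the proof of Lemma~\ref{restriction}. I would introduce $V_\theta = \{v \in \mathbb{B}(R,K) \mid q^{\perp}\,v\,\theta(q) = 0,\,\forall\,q \in \cl M\}$, for which \cite[Theorem 2.1]{Ele-app} yields the factorization $\Alg{\cl L} = \overline{[Op(\theta)\,V_\theta]}^{\text{w}^*}$. The key step is the containment $V_\theta \cdot Op(\zeta) \subseteq Op(\phi)$: for $v \in V_\theta,\,w \in Op(\zeta),\,p \in \cl N$, the identity $w\,p = \zeta(p)\,w\,p = \theta(\phi(p))\,w\,p$ gives $\phi(p)^{\perp}\,v\,w\,p = \phi(p)^{\perp}\,v\,\theta(\phi(p))\,w\,p = 0$, by applying the defining property of $V_\theta$ to $q = \phi(p) \in \cl M$. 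Since $I_R \in \Alg{\cl L} = \overline{[Op(\theta)\,V_\theta]}^{\text{w}^*}$ and right multiplication by the fixed operator $w$ is w*-continuous, any $w \in Op(\zeta)$ satisfies $w = I_R\,w \in \overline{[Op(\theta)\,V_\theta\,w]}^{\text{w}^*} \subseteq \overline{[Op(\theta)\,Op(\phi)]}^{\text{w}^*}$, as desired.
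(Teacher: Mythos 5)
Your proposal is correct and follows essentially the same route as the paper: the hard inclusion $Op(\zeta)\subseteq\overline{[Op(\theta)\,Op(\phi)]}^{\mathrm{w}^*}$ is obtained exactly as in the text, via the auxiliary module $V_\theta$ (the paper's $X$), the factorization $\Alg{\cl L}=\overline{[Op(\theta)\,V_\theta]}^{\mathrm{w}^*}$ from \cite[Theorem 2.1]{Ele-app}, and the containment $V_\theta\,Op(\zeta)\subseteq Op(\phi)$, with your appeal to w*-continuity of right multiplication replacing the paper's explicit net $\sum_r u_{j_r}x_{j_r}\to I_R$. The only organizational difference is in $(ii)\Rightarrow(i)$: the paper first notes $Op(\zeta)=Op(\theta\circ\phi)$ using the other implication and then applies Lemma~\ref{restriction} once, whereas you compute $\zeta(p)$ directly as $\Map{Op(\zeta)}(p)$ and peel off the ranges in two stages; both are valid and rest on the same lemma.
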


\begin{proof}
$(i)\implies (ii)$ Let $\zeta=\theta\circ \phi.$ For all $x\in Op(\theta),\,y\in Op(\phi)$ we have that \begin{align*}
    \zeta(p)^{\perp}\,x\,y\,p&=\theta(\phi(p))^{\perp}\,x\,y\,p\\&=\theta(\phi(p))^{\perp}\,x\,(\phi(p)+\phi(p)^{\perp})\,y\,p\\&=\theta(\phi(p))^{\perp}\,x\,\phi(p)\,y\,p+\theta(\phi(p))^{\perp}\,x\,\phi(p)^{\perp}\,y\,p\\&=0,\,\forall\,p\in\cl N
\end{align*}
since $\theta(\phi(p))^{\perp}\,x\,\phi(p)=0$ ($\phi(p)\in \cl M,\,x\in Op(\theta)$) and $\phi(p)^{\perp}\,y\,p=0$ ($y\in Op(\phi)$)
which means that $x\,y\in Op(\zeta).$ Thus, $Op(\theta)\,Op(\phi)\subseteq Op(\zeta)$ which implies that \begin{equation} \label{op1}\overline{[Op(\theta)\,Op(\phi)]}^{\text{w}^{*}}\subseteq Op(\zeta).\end{equation}
On the other hand, let $z\in Op(\zeta).$ From \cite[Theorem 2.1]{Ele-app}, we have that $\Alg{\cl L}=\overline{[Op(\theta)\,X]}^{\text{w}^{*}}$ where $X=\left\{x\in\mathbb{B}(R,K)\mid q^{\perp}\,x\,\theta(q)=0,\,\forall\,q\in\cl M\right\},$ so there exist nets $(u_j)_{j\in J},\,(x_j)_{j\in J}$ such that $u_j\in Op(\theta),\,x_j\in X,\,\forall\,j\in J$ and $$\sum_{r=1}^{k_j}u_{j_r}\,x_{j_r}\stackrel{\text{w}^{*}}{\to} I_{R}$$ which implies that \begin{equation} \label{xxx}\sum_{r=1}^{k_j} u_{j_r}\,x_{j_r}\,z\stackrel{\text{w}^{*}}{\to} z.\end{equation} We observe that \begin{align*}
    X&=\left\{x\in\mathbb{B}(R,K)\mid q^{\perp}\,x\,\theta(q)=0,\,\forall\,q\in\cl M\right\}\\&=\left\{x\in\mathbb{B}(R,K)\mid  \phi(p)^{\perp}\,x\,\theta(\phi(p))=0,\,\forall\,p\in\cl N\right\}\\&=\left\{x\in\mathbb{B}(R,K)\mid  \phi(p)^{\perp}\,x\,\zeta(p)=0,\,\forall\,p\in\cl N\right\}.
\end{align*}
We claim that $x_{j_r}\,z\in Op(\phi)\,,r=1,...,k_j,\,j\in J.$ Indeed, for all $p\in\cl N$ it holds that \begin{align*}
    \phi(p)^{\perp}\,x_{j_r}\,z\,p&=\phi(p)^{\perp}\,x_{j_r}\,(\zeta(p)+\zeta(p)^{\perp})\,z\,p\\&=\phi(p)^{\perp}\,x_{j_r}\,\zeta(p)\,z\,p+\phi(p)^{\perp}\,x_{j_r}\zeta(p)^{\perp}\,z\,p\\&=0
\end{align*}
since $\phi(p)^{\perp}\,x_{j_r}\,\zeta(p)=0$ ($\,x_{j_r}\in X$) and $\zeta(p)^{\perp}\,z\,p=0$ ($\,z\in Op(\zeta)$). Thus, $$z\stackrel{(\ref{xxx})}{=}\lim_{j}\sum_{r=1}^{k_j}u_{j_r}\,x_{j_r}\,z\in \overline{[Op(\theta)\,Op(\phi)]}^{\text{w}^{*}}$$ where the above limit is on the w*-topology, that is \begin{equation}\label{op2}Op(\zeta)\subseteq \overline{[Op(\theta)\,Op(\phi)]}^{\text{w}^{*}}.\end{equation} By (\ref{op1}) and (\ref{op2}) we have that $Op(\zeta)=\overline{[Op(\theta)\,Op(\phi)]}^{\text{w}^{*}}.$

$(ii)\implies (i)$ Suppose now that $Op(\zeta)=\overline{[Op(\theta)\,Op(\phi)]}^{\text{w}^{*}}.$ From the direction $(i)\implies (ii)$ we have that $\overline{[Op(\theta)\,Op(\phi)]}^{\text{w}^*}=Op(\theta\circ \phi),$ so $$Op(\zeta)=Op(\theta\circ \phi).$$ Since $\zeta$ and $\theta\circ \phi$ are continuous onto nest homomorphims from $\cl N$ to $\cl L,$ by Lemma \ref{restriction} we deduce that $$\zeta=\Map{Op(\zeta)}|_{\cl N}=\Map{Op(\theta\circ \phi)}|_{\cl N}=\theta\circ \phi.$$
\end{proof}

\begin{lem}
\label{tensor}
Let $\cl N,\,\cl M,\,\cl L$ be nests with corresponding nest algebras $$\cl A=\Alg{\cl N},\,\cl B=\Alg{\cl M},\,\cl C=\Alg{\cl L}$$ Let also $\phi:\cl N\to \cl M,\,\theta:\cl M\to \cl L$ be continuous onto nest homomorphisms. Then $$Op(\theta)\otimes_{\cl B}^{\sigma h}Op(\phi)\cong \overline{[Op(\theta)\,Op(\phi)]}^{\mathrm{w}^*}$$ as $\cl A$-right $\rm{w}^*$-rigged modules.
\end{lem}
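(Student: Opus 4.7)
Consider the multiplication map $\mu: Op(\theta) \times Op(\phi) \to \mathbb{B}(H, R)$, $\mu(u,v) = uv$. The right action of $\cl B = \Alg{\cl M}$ on $Op(\theta)$ and the left action of $\cl B$ on $Op(\phi)$ are both operator multiplication through the unital inclusion $\cl B \hookrightarrow \mathbb{B}(K)$, so $\mu$ is $\cl B$-balanced; being a restriction of operator composition, it is also separately w*-continuous and completely contractive. By the universal property of the normal Haagerup tensor product \cite{Ble-Kas-2}, $\mu$ induces a w*-continuous completely contractive linear map
$$\Phi : Op(\theta) \otimes_{\cl B}^{\sigma h} Op(\phi) \to \mathbb{B}(H,R), \quad \Phi(u \otimes_{\cl B} v) = uv.$$
A direct check from the formula shows that $\Phi$ is a right $\cl A = \Alg{\cl N}$-module map, and its image is w*-dense in $\overline{[Op(\theta) Op(\phi)]}^{\mathrm{w}^*}$, which by Lemma \ref{composition} equals $Op(\theta \circ \phi)$, itself a right w*-rigged module over $\cl A$ by Theorem \ref{char}.

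The remaining and main task is to prove that $\Phi$ is a complete isometry. Once this is granted, $\Phi$ is a w*-continuous complete isometry between dual operator spaces, hence automatically a w*-homeomorphism onto a w*-closed range (Krein--Smulian), which must therefore coincide with $\overline{[Op(\theta) Op(\phi)]}^{\mathrm{w}^*}$, yielding the claimed isomorphism of right w*-rigged modules over $\cl A$. My plan for this step is to exploit the concrete realization of both factors: $Op(\theta) \subseteq \mathbb{B}(K,R)$ and $Op(\phi) \subseteq \mathbb{B}(H,K)$ are dual operator $\cl B$-modules whose $\cl B$-action is operator multiplication on the common Hilbert space $K$, on which $\cl B$ acts unitally. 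Under these ``concrete'' hypotheses the standard realization theorem for the balanced normal Haagerup tensor product (used implicitly in Remark \ref{for-ref} for the case of nest isomorphisms, and more generally in the spirit of \cite[Theorem 2.4]{Ele-Paul} together with results from \cite{Ble-Kas-2}) identifies $V \otimes_{\cl B}^{\sigma h} U$ completely isometrically with $\overline{[VU]}^{\mathrm{w}^*}$ via the multiplication map.

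\textbf{Main obstacle.} The crux is the complete-isometry property of $\Phi$, i.e.\ ruling out spurious collapses in $Op(\theta) \otimes_{\cl B}^{\sigma h} Op(\phi)$ beyond those forced by $\cl B$-balancing. If the concrete-realization principle is not invoked directly, my backup is to use the approximating net of matrix-column maps $\phi_a, \psi_a$ witnessing the right w*-rigged structure of $Op(\phi)$ over $\cl A$ (Definition \ref{bas-def}) to reduce the norm computation to the much simpler tensor products $Op(\theta) \otimes_{\cl B}^{\sigma h} C_{n(a)}(\cl A)$, where the balanced tensor product collapses explicitly via the identification $Y \otimes_{\cl B}^{\sigma h} C_n(\cl A) \cong C_n(Y \otimes_{\cl B}^{\sigma h} \cl A)$; comparing with the operator norm on $\mathbb{B}(H,R)$ and passing to the w*-limit then gives the required isometry. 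The non-degeneracy of the $\cl B$-action on $K$ together with the reflexivity of the nest algebras involved is what ensures the passage from completely contractive to completely isometric.
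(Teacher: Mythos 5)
Your setup is the same as the paper's: the multiplication map induces a w*-continuous completely contractive right $\cl A$-module map $\Phi:Op(\theta)\otimes_{\cl B}^{\sigma h}Op(\phi)\to \overline{[Op(\theta)\,Op(\phi)]}^{\text{w}^*}$, and you correctly isolate the complete isometry of $\Phi$ as the only real issue (surjectivity then follows from Krein--Smulian). But your resolution of that issue is where the gap lies. There is no ``standard realization theorem'' saying that for concrete dual operator $\cl B$-modules acting compatibly on a common Hilbert space the balanced normal Haagerup tensor product is completely isometric to the w*-closed product via multiplication; in general that map is only a complete contraction and can collapse badly (already for $\cl B=\mathbb{C}$ the normal Haagerup tensor product of $\mathbb{B}(K,R)$ and $\mathbb{B}(H,K)$ is far larger than $\mathbb{B}(H,R)$). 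The results you cite in its place do not cover this situation: \cite[Theorem 2.4]{Ele-Paul} and Remark \ref{for-ref} concern TRO-generated Morita-equivalence bimodules (the case where $\theta$ is a nest \emph{isomorphism}), whereas here $\theta$ is merely a continuous onto nest homomorphism, so $Op(\theta)$ is not TRO-generated and those theorems do not apply. Distinguishing exactly these two situations is the point of the paper's two notions of equivalence, so the lemma cannot be reduced to the TRO case.

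Your backup plan also does not work as stated: the maps $\phi_a:Op(\phi)\to C_{n(a)}(\cl A)$ witnessing the rigged structure of $Op(\phi)$ over $\cl A$ are right $\cl A$-module maps, not left $\cl B$-module maps, so they do not descend to the $\cl B$-balanced tensor product $Op(\theta)\otimes_{\cl B}^{\sigma h}(\cdot)$; and even the correctly oriented version (factoring through $C_{n(a)}(\cl B)\otimes_{\cl B}^{\sigma h}Op(\phi)\cong C_{n(a)}(Op(\phi))$ using the rigged structure of $Op(\theta)$ over $\cl B$) only reproves that $\Phi$ is contractive -- it gives no lower bound $\nor{z}\leq\nor{\Phi(z)}$. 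The paper's proof obtains that lower bound by a genuinely nest-specific argument: writing $\cl C=\overline{[Op(\theta)\,Y]}^{\text{w}^*}$ with $Y=Op(\theta)^{*}$-type dual space, choosing contractive finite nets with $z_t\,y_t\to 1_{\cl C}$, restricting to finite-rank $x_i\in Op(\theta)$ so that $z_ty_tx_i\to x_i$ in norm and $y_t x_i\in\cl B$ can be slid across the balanced tensor sign to give $\nor{\sum_i x_i\otimes_{\cl B}u_i}\leq\nor{\sum_i x_iu_i}$, and then passing to general elements via the Erdos density theorem (\cite[Corollary 3.13]{Dav}): finite-rank contractions $f_j\to 1_{\cl C}$, $g_j\to 1_{\cl A}$ in the w*-topology. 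Some substitute for this finite-rank approximation step is indispensable, and it is absent from your proposal.
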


\begin{proof}
The map $Op(\theta)\times Op(\phi)\to Op(\theta)\,Op(\phi),\,(x,u)\mapsto x\,u$ is a separately w*-continuous and completely contractive right $\cl B$-balanced map, so it induces a w*-continuous and completely contractive map $$\rho:Op(\theta)\otimes_{\cl B}^{\sigma h}Op(\phi)\to \overline{[Op(\theta)\,Op(\phi)]}^{\text{w}^{*}}$$ satisfying $\rho(x\otimes_{\cl B} u)=x\,u,\,x\in Op(\theta),\,u\in Op(\phi).$ The map $\rho$ is also a right $\cl A$-module map. Set $Y=\left\{y\mid  q^{\perp}\,y\,\theta(q)=0,\,\forall\,q\in\cl M\right\}.$ From \cite[Theorem 2.1]{Ele-app} we have that $\cl C=\overline{[Op(\theta)\,Y]}^{\text{w}^{*}}.$ Therefore, there exist contractive  nets $(z_t)_{t\in T}\subseteq R^{fin}_{\infty}(Op(\theta))$ and $(y_t)_{t\in T}\subseteq C^{fin}_{\infty}(Y)$ such that $$z_t\,y_t\stackrel{\text{w}^{*}}{\to} 1_{\cl C}$$ where, in general, if $W\subseteq \mathbb{B}(H,K)$ we denote by $R^{fin}_{\infty}(W)$ (resp. $C^{fin}_{\infty}(W)$) the space of operators $(w_1,w_2,...):H^{\infty}\to K$ (resp. $(w_1,w_2,...)^{t}:H\to K^{\infty}$) such that $w_i\in W,\,\forall\,i\in\mathbb{N}$ and also there exists $i_0\in\mathbb{N}$ such that $w_i=0,\,\forall\,i\geq i_0.$ Assume that $x_i\in \cl F,\,u_i\in Op(\phi),\,i=1,...,n,$ where $\cl F$ is the space of all finite rank operators in $Op(\theta).$ Since $x_i\in\cl F$ we have that $z_t\,y_t\,x_i\stackrel{||\cdot||}{\to}x_i.$ By the fact that the Haagerup tensor product is contractive as a bilinear map, for every $\epsilon>0$ there exists $t\in T$ such that \begin{equation} \label{ineq} \nor{\sum_{i=1}^n x_i\otimes_{\cl B}u_i}-\epsilon\leq \nor{\sum_{i=1}^n z_t\,y_t\,x_i\otimes_{\cl B}u_i}.\end{equation} Let $x\in\cl F\subseteq Op(\theta)$ and $y\in Y.$ It holds that $\theta(q)^{\perp}\,x\,q=0,$ so it follows that $x\,q=\theta(q)\,x\,q,\,\forall\,q\in\cl M$ and as a consequence $$q^{\perp}\,y\,x\,q=q^{\perp}\,y\,\theta(q)\,x\,q=0,\,\forall\,q\in\cl M\, (\text{since}\,\,y\in Y),$$  which means that $Y\,\cl F\subseteq \cl B.$ Using (\ref{ineq}) we get that $$\nor{\sum_{i=1}^n x_i\otimes_{\cl B}u_i}-\epsilon\leq \nor{z_t\otimes_{\cl B}y_t\,\sum_{i=1}^n x_i\,u_i}\leq \nor{\sum_{i=1}^n x_i\,u_i}=\nor{\rho\left(\sum_{i=1}^n x_i\otimes_{\cl B}u_i\right)}.$$ But since $\epsilon$ is arbitrary we have that $$\nor{\sum_{i=1}^n x_i\otimes_{\cl B}u_i}\leq \nor{\rho\left(\sum_{i=1}^n x_i\otimes_{\cl B}u_i\right)}$$ which implies that the restriction of $\rho$ to $$\cl F\otimes_{\cl B}^h Op(\phi)=\overline{[x\otimes_{\cl B}u\mid x\in\cl F,\,u\in Op(\phi)]}^{||\cdot||}$$ is an isometry. We will prove that $\rho$ is an isometry in $Op(\theta)\otimes_{\cl B}^{\sigma h}Op(\phi).$ To this end, let $z\in Op(\theta)\otimes_{\cl B}^{\sigma h}Op(\phi).$ Thus, there exists a net $(z_{\ell})_{\ell \in I}$ in $\cl F\otimes_{\cl B}^h Op(\phi)$ such that $z_{\ell}\stackrel{\text{w}^{*}}{\to}z.$ Let $f\in Ball(\cl C),\,g\in Ball(\cl A)$ be finite rank operators. Then $f\,z_{\ell}\,g\stackrel{\text{w}^{*}}{\to}f\,z\,g$ and since $\rho$ is w*-continuous we have that $ \rho(f\,z_{\ell}\,g)\stackrel{\text{w}^{*}}{\to}\rho(f\,z\,g)$ which implies that $f\,\rho(z_{\ell})\,g\stackrel{\text{w}^{*}}{\to}\rho(f\,z\,g).$ Using the fact that $f,\,g$ are finite rank operators we deduce that $$||f\,z_{\ell}\,g||=||\rho(f\,z_{\ell}\,g)||=||f\,\rho(z_{\ell})\,g||\to ||\rho(f\,z\,g)||.$$ For every $\epsilon>0$ there exists $\ell_{0}\in I$ such that $||f\,z_{\ell}\,g||<||\rho(f\,z\,g)||+\epsilon,\,\forall\,\ell\geq \ell_{0}$ so, $||f\,z_{\ell}\,g||<||\rho(z)||+\epsilon,\,\forall\,\ell\geq \ell_{0}.$ We have that $f\,z_{\ell}\,g\stackrel{\text{w}^{*}}{\to}f\,z\,g$ which implies that $||f\,z\,g||\leq \sup\,\left\{||f\,z_{\ell}\,g||: \ell\geq \ell_{0}\right\}.$ Thus, $||f\,z\,g||<||\rho(z)||+\epsilon.$ Since $\epsilon$ is arbitrary we get \begin{equation} \label{ast} ||f\,z\,g||\leq ||\rho(z)||.\end{equation} By \cite[Corollary 3.13]{Dav} there exist contractive nets of finite rank operators $(f_{j})_{j\in J}\subseteq \cl C$ and $(g_{j})_{j\in J}\subseteq \cl A$ such that $$f_{j}\stackrel{\text{w}^{*}}{\to}1_{\cl C},\,g_{j}\stackrel{\text{w}^{*}}{\to}1_{\cl A}.$$ Thus, $f_{j}\,z\,g_{j}\stackrel{\text{w}^{*}}{\to} z$ and according to (\ref{ast}) it holds that $$||z||\leq \sup_{j\in J}||f_j\,z\,g_j||\leq ||\rho(z)||$$ which implies that $\rho$ is an isometry. Similarly we can prove that $\rho$ is a complete isometry and as a consequence (Krein-Smulian) $\rho$ is onto $\overline{[Op(\theta)\,Op(\phi)]}^{\text{w}^{*}}.$

\end{proof}

The following theorem describes how to construct Morita equivalent right w*-rigged modules.

\begin{thm}
\label{morita}
Let $\cl N_1,\,\cl N_2,\,\cl M_1,\,\cl M_2$ be nests, $\,\cl A=\Alg{\cl N_1}\subseteq \mathbb{B}(H_1),\,\cl B=\Alg{\cl N_2}\subseteq \mathbb{B}(H_2),$ $\phi:\cl N_1\to \cl M_1,\,\psi:\cl N_2\to \cl M_2$ be continuous onto nest homomorphisms and $E=Op(\phi),\,F=Op(\psi).$ If $\chi:\cl N_1\to \cl N_2$ is a nest isomorphism such that $E\cong Op(\psi\circ \chi)$ as $\cl A$-right $\mathrm{w}^*$-rigged modules, then $E$ and $F$ are Morita equivalent.

\end{thm}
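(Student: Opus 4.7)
The natural candidate for the equivalence bimodule is $Y=Op(\chi^{-1})\subseteq \mathbb{B}(H_2,H_1)$, which is a right w*-rigged module over $\cl B=\Alg{\cl N_2}$ by \cite[Theorem 2.1]{Ele-app}. Since $\chi:\cl N_1\to \cl N_2$ is a nest isomorphism, Remark \ref{for-ref} applied with $\theta=\chi$ (so $Y=Op(\chi^{-1})$ and $X=Op(\chi)$) delivers at once
$$\cl A\cong Y\otimes_{\cl B}^{\sigma h}X\quad \text{and}\quad \cl B\cong X\otimes_{\cl A}^{\sigma h}Y$$
as the appropriate bimodules, together with the identification $\tilde{Y}\cong X$. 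Substituting $X$ by $\tilde Y$, this establishes conditions (i) and (ii) of Definition \ref{mor-eq}.

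For condition (iii), Remark \ref{not-st} tells us that it is enough to exhibit one of the two required isomorphisms; I will produce $F\cong E\otimes_{\cl A}^{\sigma h}Y$ as right w*-rigged modules over $\cl B$. The hypothesis $E\cong Op(\psi\circ\chi)$ as right w*-rigged $\cl A$-modules lets me replace $E$ inside the tensor product, so
$$E\otimes_{\cl A}^{\sigma h}Y\cong Op(\psi\circ\chi)\otimes_{\Alg{\cl N_1}}^{\sigma h}Op(\chi^{-1}).$$
I then invoke Lemma \ref{tensor}, taking the ``middle'' nest to be $\cl N_1$, with $\phi=\chi^{-1}:\cl N_2\to \cl N_1$ and $\theta=\psi\circ\chi:\cl N_1\to \cl M_2$. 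This yields
$$Op(\psi\circ\chi)\otimes_{\Alg{\cl N_1}}^{\sigma h}Op(\chi^{-1})\cong \overline{[Op(\psi\circ\chi)\,Op(\chi^{-1})]}^{\text{w}^*}$$
as right w*-rigged modules over $\Alg{\cl N_2}=\cl B$. The closing step is Lemma \ref{composition}, applied with the same $\phi$ and $\theta$ and with $\zeta=\theta\circ\phi=(\psi\circ\chi)\circ\chi^{-1}=\psi$: the implication $(i)\Rightarrow (ii)$ identifies the w*-closed span on the right hand side with $Op(\psi)=F$. Chaining the isomorphisms gives $E\otimes_{\cl A}^{\sigma h}Y\cong F$, which completes (iii).

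The substantive step is really only the choice of $Y=Op(\chi^{-1})$; once that is made, everything reduces to bookkeeping, invoking Remark \ref{for-ref} for the two ``algebra'' isomorphisms and chaining Lemmas \ref{tensor} and \ref{composition} to rewrite the tensor product $E\otimes_{\cl A}^{\sigma h}Y$ as a composition-of-nest-homomorphisms module. Consequently the only potential pitfall is a bookkeeping one: making sure that the Hilbert spaces, the directions of $\chi$ and $\chi^{-1}$, and the left/right module structures all line up so that the nests $\cl N_2,\cl N_1,\cl M_2$ play the roles of the nests called $\cl N,\cl M,\cl L$ in the preparatory lemmas. No analytic obstacle is expected.
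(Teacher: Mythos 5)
Your proof is correct and follows essentially the same route as the paper: the same equivalence bimodule $Y=Op(\chi^{-1})$, Remark \ref{for-ref} for conditions (i)--(ii), and Lemmas \ref{composition} and \ref{tensor} for condition (iii). The only (immaterial) difference is that you verify $F\cong E\otimes_{\cl A}^{\sigma h}Y$ by factoring $\psi=(\psi\circ\chi)\circ\chi^{-1}$ over the middle nest $\cl N_1$, whereas the paper verifies the mirror isomorphism $E\cong F\otimes_{\cl B}^{\sigma h}\tilde{Y}$ by factoring $\psi\circ\chi$ over $\cl N_2$; Remark \ref{not-st} makes the two equivalent.
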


\begin{proof}

Let $\chi:\cl N_1\to \cl N_2$ be a nest isomorphism such that $E\cong Op(\zeta)$ as $\cl A$-right w*-rigged modules, where $\zeta=\psi\circ \chi.$ By Lemma \ref{composition} we get $$E\cong Op(\zeta)=\overline{[Op(\psi)\,Op(\chi)]}^{\text{w}^{*}}=\overline{[F\,Op(\chi)]}^{\text{w}^{*}}.$$ We define the $\cl B$-right w*-rigged module $Y=Op(\chi^{-1}).$ Let $$X=\left\{x\in\mathbb{B}(H_1,H_2)\mid q^{\perp}\,x\,\chi^{-1}(q)=0,\,\forall\,q\in\cl N_2\right\}=Op(\chi).$$ By \cite[Theorem 2.9]{Ele-mor} we have that $\cl A\cong Y\otimes_{\cl B}^{\sigma h}X,\,\cl B\cong X\otimes_{\cl A}^{\sigma h}Y.$ According to Remark \ref{for-ref} we have that $\tilde{Y}\cong X=Op(\chi)$ and thus $$\cl A\cong Y\otimes_{\cl B}^{\sigma h}\tilde{Y},\,\,\cl B\cong \tilde{Y}\otimes_{\cl A}^{\sigma h}Y.$$ Furthermore by Lemma \ref{tensor} we have that $$E\cong Op(\zeta)\cong \overline{[Op(\psi)\,Op(\chi)]}^{\text{w}^{*}}\cong Op(\psi)\otimes_{\cl B}^{\sigma h}Op(\chi)\cong F\otimes_{\cl B}^{\sigma h}\tilde{Y}.$$
Therefore, $E$ and $F$ are Morita equivalent.

\end{proof}

\begin{ex}
Let $\cl N_1,\,\cl N_2$ be isomorphic nests acting on the Hilbert spaces $H_1,\,H_2$ respectively. Let $\cl A=\Alg{\cl N_1}\subseteq \mathbb{B}(H_1)$ and $\cl B=\Alg{\cl N_2}\subseteq \mathbb{B}(H_2)$ be the corresponding nest algebras and let $\chi:\cl N_1\to \cl N_2$ be a nest isomorphism. We consider the nest $$\cl N_2\oplus \cl N_1=\left\{\chi(p)\oplus p\mid p\in\cl N_1\right\}\subseteq \mathbb{B}(H_2\oplus H_1)$$ and we define the continuous onto nest homomorphism $$\psi:\cl N_2\to \cl N_2\oplus \cl N_1,\,\psi(q)=(q,\chi^{-1}(q))$$ If $F=Op(\psi)$ then by Theorem \ref{morita} the right w*-rigged modules $E=Op(\psi\circ \chi)$ and $F$ are Morita equivalent.

\end{ex}

\begin{thm}

\label{d-morita}
Let $\cl N_1,\,\cl N_2,\,\cl M_1,\,\cl M_2$ be nests and $\,\cl A=\Alg{\cl N_1},\,\cl B=\Alg{\cl N_2}.$ Let also $\,\phi:\cl N_1\to \cl M_1,\,\psi:\cl N_2\to \cl M_2$ be continuous onto nest homomorphisms and $E=Op(\phi),\, F=Op(\psi)$. If $\chi:\cl N_1\to \cl N_2$ is a nest isomorphism which can be extended to a $*$-isomorphism $\chi':\cl N_1^{\prime \prime}\to \cl N_2^{\prime \prime}$ such that $E\cong Op(\psi\circ \chi)$ as $\cl A$-right $\mathrm{w}^*$-rigged modules, then $E$ and $F$ are strongly projectively Morita equivalent.

\end{thm}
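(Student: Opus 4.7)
The plan is to reduce to Theorem \ref{morita} and only supply the extra ingredient needed to upgrade ``Morita equivalent'' to ``strongly projectively Morita equivalent,'' namely that the bimodule $Y=Op(\chi^{-1})$ used there is in fact a strongly projectively right $\mathrm{w}^*$-rigged module over $\cl B=\Alg{\cl N_2}.$ Once that is established, the tensor-product identities
\[
\cl A\cong Y\otimes_{\cl B}^{\sigma h}\tilde Y,\qquad \cl B\cong \tilde Y\otimes_{\cl A}^{\sigma h}Y,\qquad E\cong F\otimes_{\cl B}^{\sigma h}\tilde Y
\]
are already provided by the proof of Theorem \ref{morita} (which used only that $\chi$ is a nest isomorphism, Remark \ref{for-ref}, Lemma \ref{composition} and Lemma \ref{tensor}).

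The heart of the matter is therefore to apply Corollary \ref{proj-nest} to $Y=Op(\chi^{-1})$. I would argue: by definition
\[
Y=\bigl\{y\in\mathbb{B}(H_2,H_1)\;\big|\; \chi^{-1}(p)^{\perp}\,y\,p=0,\ \forall\,p\in\cl N_2\bigr\}.
\]
The hypothesis furnishes a $*$-isomorphism $\chi':\cl N_1^{\prime\prime}\to\cl N_2^{\prime\prime}$ extending $\chi$. Setting $\theta:=(\chi')^{-1}:\cl N_2^{\prime\prime}\to\cl N_1^{\prime\prime}$, this is again a $*$-isomorphism and, being the extension of $\chi^{-1}$, satisfies $\theta(\cl N_2)=\chi^{-1}(\cl N_2)=\cl N_1$ together with
\[
Y=\bigl\{y\in\mathbb{B}(H_2,H_1)\;\big|\;\theta(p)^{\perp}\,y\,p=0,\ \forall\,p\in\cl N_2\bigr\}.
\]
This is precisely condition (ii) of Corollary \ref{proj-nest} applied to the nest $\cl N_2$ and the nest $\cl M:=\cl N_1$, so $Y$ is a concrete strongly projectively right $\mathrm{w}^*$-rigged module over $\cl B=\Alg{\cl N_2}$; in particular it is a strongly projectively right $\mathrm{w}^*$-rigged module over $\cl B$ in the abstract sense.

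With that in hand, I would close the argument by invoking Theorem \ref{morita} verbatim: take $Y=Op(\chi^{-1})$, deduce $\tilde Y\cong Op(\chi)$ from Remark \ref{for-ref}, obtain the two bimodule isomorphisms $\cl A\cong Y\otimes_{\cl B}^{\sigma h}\tilde Y$ and $\cl B\cong \tilde Y\otimes_{\cl A}^{\sigma h}Y$ from Remark \ref{for-ref}, and derive $E\cong F\otimes_{\cl B}^{\sigma h}\tilde Y$ via $E\cong Op(\psi\circ\chi)=\overline{[Op(\psi)\,Op(\chi)]}^{\mathrm{w}^*}\cong F\otimes_{\cl B}^{\sigma h}\tilde Y$ using Lemma \ref{composition} and Lemma \ref{tensor}. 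Since the equivalence bimodule $Y$ is now strongly projectively right $\mathrm{w}^*$-rigged, all three conditions of Definition \ref{d-mor} are met, and $E$, $F$ are strongly projectively Morita equivalent.

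I do not expect a serious obstacle: the only subtle point is matching the hypothesis ``$\chi$ extends to a $*$-isomorphism of the enveloping von Neumann algebras $\cl N_i^{\prime\prime}$'' with the hypothesis of Corollary \ref{proj-nest}, but since $\Delta(\Alg{\cl N_i})^{\prime}=\cl N_i^{\prime\prime}$ (Remark \ref{nest-lat}), this match is immediate once one inverts $\chi'$.
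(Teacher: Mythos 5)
Your proposal is correct and follows essentially the same route as the paper: both arguments apply Corollary \ref{proj-nest} to $\theta=(\chi')^{-1}$ to see that $Y=Op(\chi^{-1})$ is a (concrete) strongly projectively right $\mathrm{w}^*$-rigged module over $\cl B$, and then reuse the machinery of Theorem \ref{morita} (Remark \ref{for-ref}, Lemma \ref{composition}, Lemma \ref{tensor}) to obtain the required bimodule and tensor-product isomorphisms. No gaps.
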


\begin{proof}

Let $\chi:\cl N_1\to \cl N_2$ be a nest isomorphism which can be extended to a $*$-isomorphism $\chi':\cl N_1^{\prime \prime}\to \cl N_2^{\prime \prime}$ such that $E\cong Op(\psi\circ \chi)$ as $\cl A$-right w*-rigged modules. By the fact that $(\chi^{\prime})^{-1}:\cl N_2^{\prime \prime}\to \cl N_1^{\prime \prime}$ is a $*$-isomorphism such that $(\chi^{\prime})^{-1}(\cl N_2)=\chi^{-1}(\cl N_2)=\cl N_1,$ we have that the space $Y=Op(\chi^{-1})$ is a concrete strongly projectively right w*-rigged module over $\cl B,$ see Corollary \ref{proj-nest}. Then $\tilde{Y}\cong Op(\chi)$ (Remark \ref{for-ref}) and $Y$ is a bimodule of w*-Morita equivalence between the algebras $\cl A$ and $\cl B.$ By Lemma \ref{composition}, we have that $Op(\psi\circ \chi)=\overline{[Op(\psi)\,Op(\chi)]}^{\text{w}^{*}}$ and from Lemma \ref{tensor} we deduce that $$E\cong Op(\psi\circ \chi)=\overline{[Op(\psi)\,Op(\chi)]}^{\text{w}^{*}}\cong Op(\psi)\otimes_{\cl B}^{\sigma h}Op(\chi)\cong F\otimes_{\cl B}^{\sigma h}\tilde{Y}.$$
Therefore, $E$ and $F$ are strongly projectively Morita equivalent.
\end{proof}

\textbf{Acknowledgements}: I would like to express my appreciation to my supervising professor G.K. Eleftherakis for his helpful suggestions during the preparation of this paper. I am also grateful to the referee and the reviewers for carefully reading the paper and making significant improvements.

\noindent

\end{document}